\newenvironment{enumeratei}{\begin{enumerate}[label={\upshape(\roman*)}, noitemsep, topsep=2pt plus 2pt, leftmargin=*, widest=iii]}{\end{enumerate}}
\newtheorem{theorem}{Theorem}
\newtheorem{conjecture}[theorem]{Conjecture}
\newtheorem{proposition}[theorem]{Proposition}
\newtheorem{claim}[theorem]{Claim}
\newcommand{\cgA}{\mathcal{A}}
\newcommand{\cgB}{\mathcal{B}}
\newcommand{\cgF}{\mathcal{F}}
\newcommand{\cgM}{\mathcal{M}}
\newcommand{\cgP}{\mathcal{P}}
\newcommand{\ZZ}{\mathbb{Z}}
\def\pmod#1{\ ({\operator@font mod}\,\,#1)}
\let\leq\leqslant
\let\geq\geqslant
\let\setminus\smallsetminus
\let\old@setaddresses\@setaddresses
\def\@setaddresses{\bgroup\parindent 0pt\let\scshape\relax\old@setaddresses\egroup}
\title{An extremal problem on crossing vectors}
\author[M.~Laso\protect\'n]{Micha\l{}~Laso\'n}
\author[P.~Micek]{Piotr~Micek}
\author[N.~Streib]{Noah~Streib}
\author[W.~T.~Trotter]{William~T.~Trotter}
\author[B.~Walczak]{Bartosz~Walczak}
\address[Micha\l{} Laso\'n]{Theoretical Computer Science Department, Faculty of Mathematics and Computer Science, Jagiellonian University, Krak\'ow, Poland; Institute of Mathematics of the Polish Academy of Sciences, Warsaw, Poland; \'Ecole Polytechnique F\'ed\'erale de Lausanne, Switzerland}
\email{michalason@gmail.com}
\address[Piotr Micek]{Theoretical Computer Science Department, Faculty of Mathematics and Computer Science, Jagiellonian University, Krak\'ow, Poland}
\email{micek@tcs.uj.edu.pl}
\address[Noah Streib, William T. Trotter]{School of Mathematics, Georgia Institute of Technology, Atlanta, GA~30332, USA}
\email{nstreib3@math.gatech.edu, trotter@math.gatech.edu}
\address[Bartosz Walczak]{Theoretical Computer Science Department, Faculty of Mathematics and Computer Science, Jagiellonian University, Krak\'ow, Poland; \'Ecole Polytechnique F\'ed\'erale de Lausanne, Switzerland}
\email{walczak@tcs.uj.edu.pl}
\thanks{Journal version of this paper appeared in \emph{J.\ Combin.\ Theory Ser.\ A}, 128:41--55, 2014.}
\thanks{Micha\l{} Laso\'n was supported by Polish National Science Centre grant N~N206 568240 and by Swiss National Science Foundation grants 200020-144531 and 200021-137574.}
\thanks{Bartosz Walczak was supported by Swiss National Science Foundation grant 200020-144531.}
\begin{document}
\baselineskip 14pt

\begin{abstract}
For positive integers $w$ and $k$, two vectors $A$ and $B$ from $\mathbb{Z}^w$ are called \emph{$k$-crossing} if there are two coordinates $i$ and $j$ such that $A[i]-B[i]\geq k$ and $B[j]-A[j]\geq k$.
What is the maximum size of a family of pairwise $1$-crossing and pairwise non-$k$-crossing vectors in $\mathbb{Z}^w$?
We state a conjecture that the answer is $k^{w-1}$.
We prove the conjecture for $w\leq 3$ and provide weaker upper bounds for $w\geq 4$.
Also, for all $k$ and $w$, we construct several quite different examples of families of desired size $k^{w-1}$.
This research is motivated by a natural question concerning the width of the lattice of maximum antichains of a partially ordered~set.
\end{abstract}

\maketitle

\section{Introduction}

We deal with vectors in $\ZZ^w$, which we call just \emph{vectors}.
The $i$th coordinate of a vector $A\in\ZZ^w$ is denoted by $A[i]$, for $1\leq i\leq w$.
The product ordering on $\ZZ^w$ is defined by setting $A\leq B$ for $A,B\in\ZZ^w$ whenever $A[i]\leq B[i]$ for every coordinate~$i$.
When $k\geq 1$, we say that vectors $A$ and $B$ from $\ZZ^w$ are \emph{$k$-crossing} if there are coordinates $i$ and $j$ for which $A[i]-B[i]\geq k$ and $B[j]-A[j]\geq k$.
Thus $\cgA$ is an antichain in $\ZZ^w$ if and only if any two distinct vectors from $\cgA$ are $1$-crossing.
A family of vectors in $\ZZ^w$ is \emph{$k$-crossing-free} if it contains no two $k$-crossing vectors.

For positive integers $k$ and $w$, let $f(k,w)$ denote the maximum size of a subset of $\ZZ^w$ with any two vectors being $1$-crossing but not $k$-crossing.
In other words, $f(k,w)$ is the maximum size of a $k$-crossing-free antichain in $\ZZ^w$.
Note that an antichain of vectors in $\ZZ^w$ with $w\geq 2$ without the restriction that no two vectors are $k$-crossing can have infinite size (e.g.\ $\{(k,-k)\colon k\in\ZZ\}$ for $w=2$).
Similarly, there are infinite $k$-crossing-free families of vectors in $\ZZ^w$ which are not antichains (e.g.\ $\{(k,k)\colon k\in\ZZ\}$ for $w=2$).

Determining the value of $f(k,w)$ is the main focus of this paper.
The following striking simple conjecture was formulated in 2010 and never published, so we state it here with the kind permission of its authors.

\begin{conjecture}[Felsner, Krawczyk, Micek]
\label{conj:main}
For all\/ $k,w\geq 1$, we have
\[\text{$f(k,w)=k^{w-1}$.}\]
\end{conjecture}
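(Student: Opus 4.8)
Both directions need work, and I would keep them separate. The lower bound $f(k,w)\ge k^{w-1}$ is easy and uniform in $w$, so I would settle it first. For each $\mathbf a=(a_1,\dots,a_{w-1})$ with all $a_i\in\{0,1,\dots,k-1\}$, put $X_{\mathbf a}=(a_1,\dots,a_{w-1},-(a_1+\dots+a_{w-1}))\in\ZZ^w$, and let $\cgA$ be the family of all $k^{w-1}$ such vectors. For $\mathbf a\ne\mathbf b$ the difference $v:=X_{\mathbf a}-X_{\mathbf b}$ has $v_i=a_i-b_i\in\{-(k-1),\dots,k-1\}$ for $i<w$ and $v_w=-\sum_{i<w}(a_i-b_i)$. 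If $v_1,\dots,v_{w-1}$ are all $\ge 0$ then, as they do not all vanish, $v_w<0$; symmetrically, if they are all $\le 0$ then $v_w>0$; otherwise two of them already have opposite signs. Thus $v$ always has a positive and a negative coordinate, so $\cgA$ is an antichain. A $k$-crossing pair would require $v$ to have one coordinate $\ge k$ and another $\le-k$; but $|v_i|\le k-1$ for $i<w$, so both would have to be coordinate $w$, which is impossible. Hence $\cgA$ witnesses $f(k,w)\ge k^{w-1}$, and the whole problem is the matching upper bound.

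For the upper bound I would induct on $w$ by slicing. The base cases are elementary: $\ZZ^1$ is totally ordered, so $f(k,1)=1$; and an antichain in $\ZZ^2$ has pairwise distinct first coordinates and pairwise distinct second coordinates, so listing it as $A_1,\dots,A_m$ with strictly increasing first coordinate makes the second coordinate strictly decreasing, whence $A_m[1]-A_1[1]\ge m-1$ and $A_1[2]-A_m[2]\ge m-1$, and if $m-1\ge k$ the pair $\{A_1,A_m\}$ is $k$-crossing, giving $f(k,2)\le k$. For the inductive step, let $\cgA\subset\ZZ^w$ be any antichain with no $k$-crossing pair, fix a coordinate $c$, and partition $\cgA$ by the value of coordinate $c$. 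Inside each class no incomparability or $k$-crossing can involve coordinate $c$, so, read in the remaining $w-1$ coordinates, each class is an antichain with no $k$-crossing pair and hence has at most $f(k,w-1)$ vectors; therefore $|\cgA|\le N_c\cdot f(k,w-1)$, where $N_c$ is the number of distinct values that coordinate $c$ takes on $\cgA$. So it would suffice to show that some coordinate of $\cgA$ takes at most $k$ distinct values: combined with the inductive bound $f(k,w-1)\le k^{w-2}$ this gives $|\cgA|\le k^{w-1}$.

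Establishing that reduction is the crux, and I expect it to be the main obstacle. For $w=3$ I would first note that the projection of $\cgA\subset\ZZ^3$ onto its first two coordinates is injective on any antichain, so it is enough to bound the number of projected points. After translating so the third coordinate runs over $0,\dots,M$, we are done at once if $M<k$; and if $M\ge k$ the non-$k$-crossing condition becomes rigid across far-apart slices, for if $A[3]\le s$ and $B[3]\ge s+k$ then coordinate $3$ of $B-A$ is $\ge k$, so no coordinate of $B-A$ may be $\le-k$, forcing $A[1]\le B[1]+k-1$ and $A[2]\le B[2]+k-1$, while incomparability forces $A$ to beat $B$ in coordinate $1$ or coordinate $2$. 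Combining these inequalities with the $\ZZ^2$ bound applied inside each slice and to the extreme slices into a careful count should show that $|\cgA|\le k^2$, i.e.\ that some coordinate takes at most $k$ distinct values when $w=3$. The reason the argument stops here — the real difficulty — is that the pairwise non-$k$-crossing constraints do not aggregate cleanly, obeying no triangle-type inequality, so for $w\ge 4$ the constraints between slices no longer pin a coordinate down to $k$ values, the slicing becomes genuinely lossy, and one is pushed toward a global argument that we can only carry out partially; this is precisely where the proof falls short of the full conjecture and yields only the weaker upper bounds for $w\ge 4$.
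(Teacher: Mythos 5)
First, note that the statement you are asked about is a \emph{conjecture}: the paper itself proves $f(k,w)=k^{w-1}$ only for $w\le 3$ and gives weaker upper bounds for $w\ge 4$, so no complete proof is expected. Your lower bound is correct and is literally the paper's own construction (the rank-$0$ family with $0\le A[i]\le k-1$ for $i<w$), and your $f(k,2)\le k$ argument is the paper's Proposition~\ref{prop:fix}. The slicing inequality $|\cgA|\le N_c\cdot f(k,w-1)$ is also sound.

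The genuine gap is in the reduction you build the upper bound on: the claim that some coordinate of $\cgA$ takes at most $k$ distinct values is false, already for $w=3$. The paper's cyclic construction with $k=2$ gives the extremal antichain $\{(1,1,1),(0,2,1),(2,1,0),(1,0,2)\}$ in $\ZZ^3$: it has $k^2=4$ pairwise $1$-crossing, pairwise non-$2$-crossing vectors, yet \emph{every} coordinate takes the three values $0,1,2>k$. So the sufficient condition you propose to establish cannot hold for arbitrary $\cgA$, and the inductive scheme ``find a coordinate with $\le k$ values, then slice'' collapses even in the first nontrivial case. Relatedly, your ``i.e.''\ equating ``$|\cgA|\le k^2$'' with ``some coordinate takes at most $k$ distinct values'' is a non sequitur: the former does not imply the latter (and only the former is true). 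The remaining sketch for $w=3$ (``a careful count'' combining the slice bound with the rigidity between slices at distance $\ge k$ in coordinate $3$) is far from a proof; the paper's actual argument for $f(k,3)\le k^2$ first compresses coordinate $3$, builds a digraph with short and long edges to extract short paths of $k+1$ vectors between consecutive levels of each residue class, and then runs a delicate counting argument with ``blocks'' and ``spaces'' to show each residue class $\cgA_s$ has at most $k$ elements. None of that structure is visible in your outline, and the difficulty you locate at $w\ge 4$ in fact already defeats your approach at $w=3$.
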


At first, it is even not clear whether $f(k,w)$ is bounded for all $k$ and $w$.
We prove the conjecture for $1\leq w\leq 3$ and provide lower (matching the conjectured value) and upper bounds on $f(k,w)$ for $w\geq 4$.
Still, we are unable to resolve the conjecture in full generality.

\begin{theorem}
\label{thm:w=3}
For\/ $1\leq w\leq 3$ and\/ $k\geq 1$, we have
\[\text{$f(k,w)=k^{w-1}$.}\]
\end{theorem}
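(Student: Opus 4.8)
The cases $w\le 2$ are quick. For $w=1$ no two distinct vectors of $\ZZ^1$ are $1$-crossing (that would need $A[1]-B[1]\ge 1$ and $B[1]-A[1]\ge 1$), so any $1$-crossing family is a singleton and $f(k,1)=1=k^{0}$. For $w=2$ the lower bound is the ``staircase'' $(0,k-1),(1,k-2),\dots,(k-1,0)$: it is an antichain, and any two of its members differ by at most $k-1$ in each coordinate, so no two are $k$-crossing. For the matching upper bound one takes an antichain $A_1,\dots,A_n$ in $\ZZ^2$ sorted by the first coordinate; then the second coordinate is strictly decreasing, so $A_n[1]-A_1[1]\ge n-1$ and $A_1[2]-A_n[2]\ge n-1$, and $A_1,A_n$ are $k$-crossing as soon as $n\ge k+1$. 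Hence $n\le k$.

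For the lower bound $f(k,3)\ge k^2$ I would exhibit the explicit family $\set{V_{i,j}:0\le i,j\le k-1}$ with
\[V_{i,j}=\bigl(i,\ k^2-1-ki-j,\ j\bigr).\]
Since $(i,j)\mapsto ki+j$ is a bijection of $\set{0,\dots,k-1}^2$ onto $\set{0,\dots,k^2-1}$, the middle coordinates are pairwise distinct and reverse the linear order of $ki+j$; from this one checks directly that the family is an antichain (whenever one of $V_{i,j},V_{i',j'}$ is not above the other in coordinates $1,3$, it is above in coordinate $2$). Moreover any two of these vectors agree up to $k-1$ in coordinates $1$ and $3$, so the only coordinate in which they can differ by $k$ or more is the second one; a single coordinate cannot witness a $k$-crossing, so the family is pairwise non-$k$-crossing.

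For the upper bound $f(k,3)\le k^2$ --- the heart of the theorem --- the plan is to slice $\cgA$ by the first coordinate: put $\cgA_t=\set{A\in\cgA:A[1]=t}$. Reading each slice off in coordinates $2$ and $3$, it becomes an antichain in $\ZZ^2$ with no two $k$-crossing vectors, so $|\cgA_t|\le f(k,2)=k$ by the already-proved case $w=2$. If the first coordinate takes at most $k$ values on $\cgA$ we are done. Otherwise two slices $\cgA_s,\cgA_t$ with $s<t$ have $t-s\ge k$, and then non-$k$-crossing forces $A[\ell]\le B[\ell]+k-1$ for every $A\in\cgA_s$, $B\in\cgA_t$ and $\ell\in\set{2,3}$; combined with the antichain condition this confines the slices to a narrow band in coordinates $2$ and $3$. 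The aim is to trade ``how spread out the slices are in the first coordinate'' against ``how spread out they may be in the other two'', and so bound $|\cgA|=\sum_t|\cgA_t|$ by $k^2$. (A useful auxiliary observation: projecting away any one coordinate preserves the non-$k$-crossing property, though not the antichain property; analysing the projection $\pi(\cgA)\subseteq\ZZ^2$ one gets that its chains have length $\le 2k-1$ and its antichains size $\le k$.)

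The main obstacle is precisely this final combination. The naive versions lose a factor of about $2$: the projection argument of the previous paragraph only yields $|\cgA|\le k(2k-1)$ because a chain of length $2k-1$ and an antichain of size $k$ in $\pi(\cgA)$ need not coexist at full size, and grouping slices into windows of width $k$ gives no bound at all. One cannot rescue this by claiming that some coordinate always takes at most $k$ values --- for $k=2$ the family $\set{(0,1,2),(2,0,1),(1,2,0)}$ is an antichain with no two $2$-crossing in which all three coordinates take $k+1$ values. So the delicate point is to show that when many distinct first-coordinate values occur the slices are correspondingly small; I expect this to require a careful case analysis of how consecutive slices overlap in coordinates $2$ and $3$, or alternatively an induction reducing $k$ to $k-1$ at the cost of deleting $O(k)$ vectors ($f(k,3)\le f(k-1,3)+(2k-1)$), and I expect it to be the only genuinely hard step.
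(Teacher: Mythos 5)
There is a genuine gap: the upper bound $f(k,3)\le k^2$, which you yourself identify as ``the heart of the theorem,'' is never proved. Your treatment of $w\le 2$ and the lower-bound construction for $w=3$ are correct (and your $k=2$ example $\set{(0,1,2),(2,0,1),(1,2,0)}$ is a valid witness that no coordinate need take only $k$ values). Your proposed recursion $f(k,3)\le f(k-1,3)+(2k-1)$ would indeed telescope to exactly $k^2$, but you give no argument for it, and the slicing-plus-projection estimates you do carry out only reach $k(2k-1)$. So what you have is an accurate map of the difficulty, not a proof.

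For comparison, the paper closes this gap with machinery quite different from first-coordinate slicing. It takes $\cgA$ \emph{compressed} on the third coordinate (minimizing $\sum_A A[3]$ over antichains of the same size with non-negative third coordinate), builds a digraph on $\cgA$ with \emph{short} edges (third coordinate drops by $1$, other coordinates weakly increase) and \emph{long} edges (third coordinate jumps up by $k-1$ with a $\ge k$ drop elsewhere), and uses compression to show every vector reaches level $0$ along such edges, hence that from every vector with $A[3]\ge k$ there is a purely short path of length $k$. It then partitions $\cgA$ by the residue of $A[3]$ modulo $k$ into classes $\cgA_s$, further splits each $\cgA_s$ into levels, and runs a counting argument over ``blocks'' and ``spaces'' determined by how the chosen short paths and the dual linear orders on each level interact; the non-$k$-crossing condition caps each block's budget at $k-1$, and summing over the $r$ blocks yields $|\cgA_s|\le k$, hence $|\cgA|\le k^2$. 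This compression-and-counting scheme is precisely the ``careful case analysis of how consecutive slices overlap'' that you anticipated needing but did not supply; without it (or an actual proof of your proposed recursion) the theorem is not established.
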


\begin{theorem}
\label{thm:bounds}
For\/ $w\geq 4$ and\/ $k\geq 1$, we have
\[\text{$k^{w-1}\leq f(k,w)\leq\min\{k^w-k^2(k-1)^{w-2},\lceil\tfrac{w}{3}\rceil k^{w-1}\}$.}\]
\end{theorem}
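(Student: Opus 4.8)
The plan is to treat the two halves of the inequality separately, and within the upper bound to prove the two estimates by quite different means.

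\textbf{Lower bound.} Here I would just exhibit one explicit family. Set
\[
\cgA_0=\set{(a_1,\dots,a_{w-1},c)\in\ZZ^w:\ a_1,\dots,a_{w-1}\in\set{0,1,\dots,k-1}\ \text{and}\ c=-(a_1+\dots+a_{w-1})}.
\]
Then $|\cgA_0|=k^{w-1}$, and every vector of $\cgA_0$ has coordinate sum $0$. Since distinct comparable vectors in $\ZZ^w$ have distinct coordinate sums, $\cgA_0$ is an antichain, i.e.\ its members are pairwise $1$-crossing. For distinct $A,B\in\cgA_0$ we have $|A[i]-B[i]|\le k-1$ whenever $i\le w-1$, so the last coordinate is the only one that can realize a difference of absolute value at least $k$; as a $k$-crossing pair requires two \emph{distinct} such coordinates, no two members of $\cgA_0$ are $k$-crossing. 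Hence $f(k,w)\ge k^{w-1}$. (The abstract promises several structurally different constructions, but for the theorem one suffices.)

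\textbf{The bound $f(k,w)\le k^w-k^2(k-1)^{w-2}$.} Let $\cgA\subset\ZZ^w$ be an antichain with no two $k$-crossing vectors. The starting point is that the residue map $A\mapsto(A[1]\bmod k,\dots,A[w]\bmod k)$ is injective on $\cgA$: if distinct $A,B$ coincided in every coordinate modulo $k$, then $A-B$ would be a nonzero integer vector all of whose entries are divisible by $k$, and, $\cgA$ being an antichain, $A-B$ would have an entry $\ge k$ and an entry $\le-k$, so $A,B$ would be $k$-crossing. This already gives $f(k,w)\le k^w$ (in particular $\cgA$ is finite). To gain the extra $k^2(k-1)^{w-2}$, the plan is to fix an extremal member $A^\ast$ of $\cgA$ and translate $\cgA$ (a symmetry of the problem) so that $A^\ast$ lies conveniently; the non-$k$-crossing condition relative to $A^\ast$ is then \emph{one-sided}, in that every other $A\in\cgA$ has all of its coordinates $\ge-(k-1)$ relative to $A^\ast$ or all of them $\le k-1$ relative to $A^\ast$. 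Exploiting this one-sidedness on $w-2$ of the coordinates, while keeping two coordinates free, I would argue that a structured family of at least $k^2(k-1)^{w-2}$ residue patterns cannot occur in the image of the injection. Making this count come out exactly is delicate, and is one of the two places where I expect all the work to sit.

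\textbf{The bound $f(k,w)\le\lceil w/3\rceil k^{w-1}$.} Partition $[w]$ into $m=\lceil w/3\rceil$ blocks $B_1,\dots,B_m$, each of size at most $3$. For a block $B_t$ and a fixed assignment $c$ of values to the coordinates outside $B_t$, the vectors of $\cgA$ whose restriction to $[w]\setminus B_t$ equals $c$ are pairwise incomparable already on $B_t$ (else two of them would be comparable in $\ZZ^w$) and remain pairwise non-$k$-crossing on $B_t$; so by Theorem~\ref{thm:w=3} at most $f(k,|B_t|)\le k^2$ of them correspond to a given $c$. What remains is to assemble these local counts into $\lceil w/3\rceil k^{w-1}$, and here one cannot simply multiply the per-block bounds, since a projection of $\cgA$ onto a block need not be an antichain. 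My plan is to order $\cgA=\set{A_1,\dots,A_n}$ along a Hamiltonian path of the tournament obtained by orienting each pair $\set{A,B}$ so that the tail never exceeds the head by $k$ in any coordinate (this orientation exists precisely because no two vectors are $k$-crossing, and the path exists because $\cgA$ is finite), to exploit that along this order every coordinate drops by at most $k-1$ from one vector to the next, and then to charge each $A_i$ to a block $B_t$ in such a way that no block is charged more than $k^{w-1}$ times. Combining the two estimates yields $f(k,w)\le\min\{k^w-k^2(k-1)^{w-2},\lceil\tfrac{w}{3}\rceil k^{w-1}\}$, and together with the construction above this proves the theorem.

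\textbf{Main obstacle.} In both upper bounds the difficulty is the global accounting, not any single inequality: the naive inductive estimate $f(k,w)\le k\,f(k,w-1)$ would combine with Theorem~\ref{thm:w=3} to settle Conjecture~\ref{conj:main} outright, so it has to fail, and it fails exactly because deleting a coordinate destroys the antichain property. Hence the counting must be done globally --- through the missing residue patterns in the first estimate and through the tournament ordering together with the charging scheme in the second --- and these are the steps I expect to be hard.
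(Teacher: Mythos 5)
Your lower bound is complete and coincides with the paper's construction, but both upper bounds are only plans with the decisive steps explicitly deferred, and the idea that actually carries the paper's proof is absent: the recursive inequality of Claim~\ref{cla:recursive}, $f(k,w)\le k^{w-1}+(k-1)f(k,w-1)$, together with its block version $f(k,w)\le k^{w-v}f(k,v)+k^{v}f(k,w-v)$. The paper proves the claim by projecting $\cgA$ onto the first $w-1$ coordinates (the projection is injective because $\cgA$ is an antichain), observing that within each residue class modulo $k$ the projected vectors form a chain (two incomparable ones in the same class would give a $k$-crossing pair), setting aside the set $\cgM$ of maximal elements of these at most $k^{w-1}$ chains, and then showing that the remaining projected vectors form a poset of height at most $k-1$: a chain $C_1<\ldots<C_k$ outside $\cgM$, compared with the maximal element $M$ of the residue class of $C_k$, yields a $k$-crossing pair $\set{C_1,M}$, because $M$ exceeds $C_k$ (hence $C_1$) by at least $k$ in some retained coordinate while the deleted coordinate decreases strictly along $C_1,\ldots,C_k,M$ and so drops by at least $k$ in total. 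By Mirsky's theorem the remainder splits into $k-1$ antichains, each of size at most $f(k,w-1)$. Unrolling this recursion from the base $f(k,3)=k^2$ (Theorem~\ref{thm:w=3}) gives $k^w-k^2(k-1)^{w-2}$, and the block version with $v=3$ gives $\lceil\tfrac{w}{3}\rceil k^{w-1}$.

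Neither of your substitutes closes this gap. For the first bound you say only that ``a structured family of at least $k^2(k-1)^{w-2}$ residue patterns cannot occur'' and concede that making the count work is where all the work sits; no such argument is given, and the one-sidedness relative to a single extremal $A^\ast$ constrains only pairs involving $A^\ast$, not the global distribution of residues. For the second bound, the tournament orientation and Hamiltonian path control only consecutive vectors along the path (each coordinate drops by at most $k-1$ per step) and say nothing about non-consecutive pairs, while the charging scheme that is supposed to assign each vector to a block at most $k^{w-1}$ times is never described. As you yourself observe, the naive induction fails because deleting coordinates destroys the antichain property; the paper's answer to exactly that difficulty is the $\cgM$-plus-height-$(k-1)$ decomposition above, and without it (or a worked-out replacement) the two upper bounds remain unproved.
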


The remainder of this paper is organized as follows.
We start, in the next section, by a brief discussion of problems in partially ordered sets that initiated this research.
Section \ref{s:bounds} is devoted to the proof of Theorem \ref{thm:bounds} and the lower bound of Theorem~\ref{thm:w=3}.
The upper bound of Theorem \ref{thm:w=3} is proved in Section~\ref{s:proof-w=3}.
In Section \ref{s:generalization}, we propose another conjecture, which is at first glance more general but in fact equivalent to Conjecture \ref{conj:main}.
Concluding in Section \ref{s:examples}, we provide examples of families witnessing $f(k,w)\geq k^{w-1}$ with a discussion why the full resolution of the conjecture seems to be difficult.
We also present a proof of the conjecture for families of vectors with a single coordinate differentiating all vectors in the family and another argument for ranked families of vectors, that is, families in which the coordinates of every vector sum up to the same value.

\section{Background motivation}

Let $\cgM(P)$ denote the family of all maximum antichains (that is, antichains of maximum size) in a finite poset $P$.
The family $\cgM(P)$ is partially ordered by setting $A\leq B$ when for every $a\in A$ there is $b\in B$ with $a\leq b$ in $P$, or equivalently, when for every $b\in B$ there is $a\in A$ with $a\leq b$ in $P$.
The family $\cgM(P)$ equipped with this partial order forms a distributive lattice \cite{Dil60}, and every finite distributive lattice is isomorphic to $\cgM(P)$ for some poset $P$ \cite{Koh83}.
In the following, we are concerned with the order structure of $\cgM(P)$, in particular its width.

For positive integers $k_1,\ldots,k_n$, let $\boldsymbol{k_1}+\cdots+\boldsymbol{k_n}$ denote the poset consisting of $n$ pairwise disjoint chains of sizes $k_1,\ldots,k_n$ with no comparabilities between points in distinct chains.
For a positive integer $k$, let $\cgP(k)$ denote the class of posets containing no subposet isomorphic to $\boldsymbol{k}+\boldsymbol{k}$.
The posets in $\cgP(1)$ are just the chains, while $\cgP(2)$ is exactly the class of interval orders \cite{Fis70,Mir72}.
For positive integers $k$ and $w$, let $\cgP(k,w)$ denote the subclass of $\cgP(k)$ consisting of posets of width at most $w$.

Recently, several results in combinatorics of posets showed that problems that are difficult or even impossible to deal with for all posets of bounded width become much easier when only posets from $\cgP(k,w)$ are considered.
This includes the on-line chain partitioning problem \cite{BKS10,DJW12,JM11} and the on-line dimension problem \cite{FKT13}.

It is easy to see that the width of $\cgM(\boldsymbol{k}+\boldsymbol{k})$ is $k$, and it follows from Sperner's theorem \cite{Spe28} that the width of $\cgM(\underbrace{\boldsymbol{2}+\cdots+\boldsymbol{2}}_w)$ is $\binom{w}{\lfloor w/2\rfloor}$.
However, it turns out that the width of $\cgM(P)$ can be bounded by a constant when the width of $P$ is bounded and the size of a $\boldsymbol{k}+\boldsymbol{k}$ type structure in $P$ is bounded as well.

\begin{proposition}
\label{prop:relation}
For\/ $k,w\geq 1$ and\/ $P\in\cgP(k+1,w)$, the width of\/ $\cgM(P)$ is at most\/ $f(k,w)$.
\end{proposition}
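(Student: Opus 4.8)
The plan is to take an antichain $\cgA$ in the lattice $\cgM(P)$ and encode each maximum antichain $A\in\cgA$ as a vector in $\ZZ^w$ so that the comparability relation in $\cgM(P)$ translates into the product ordering on $\ZZ^w$, and the absence of a $\bfk+\bfk$ (equivalently, $\bfkpi$) subposet in $P$ translates into the non-$k$-crossing condition. Fix a chain partition $C_1,\ldots,C_w$ of $P$ into $w$ chains, which exists because $P$ has width at most $w$ (Dilworth). For a maximum antichain $A\in\cgM(P)$ and a chain $C_i$, the set $A\cap C_i$ is either empty or a single element; more useful is to look at the "cut" that $A$ makes in each chain. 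Concretely, I would define $\Phi(A)\in\ZZ^w$ by letting $\Phi(A)[i]$ be the number of elements of $C_i$ that lie strictly below $A$ in $P$ (i.e., $c\in C_i$ with $c< a$ for some $a\in A$, or more carefully $c\le a$ for some $a\in A$ but $c\notin A$) — some such "height along $C_i$ relative to $A$" statistic. The first step is to check that $\Phi$ is injective on $\cgM(P)$ and order-preserving: if $A\le B$ in $\cgM(P)$ then $\Phi(A)\le\Phi(B)$ coordinatewise, and if $A,B$ are incomparable in $\cgM(P)$ then $\Phi(A)$ and $\Phi(B)$ are $1$-crossing. This is a routine but slightly delicate argument about how maximum antichains sit inside a chain partition, using the lattice structure of $\cgM(P)$ (the meet and join of two maximum antichains are again maximum antichains).

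The second and key step is to show that if $A,B\in\cgM(P)$ and $\Phi(A),\Phi(B)$ are $\bfkpi$-crossing — that is, there are coordinates $i,j$ with $\Phi(A)[i]-\Phi(B)[i]\ge k+1$ and $\Phi(B)[j]-\Phi(A)[j]\ge k+1$ — then $P$ contains a copy of $(\mathbf{k{+}1})+(\mathbf{k{+}1})$, i.e.\ $P\notin\cgP(k{+}1,w)$. The idea: a large gap $\Phi(A)[i]-\Phi(B)[i]\ge k+1$ means there are at least $k+1$ elements of the chain $C_i$ that are below $A$ but not below $B$, hence incomparable to (or above) every element of $B\cap C_j$-region; symmetrically the gap in coordinate $j$ yields $k+1$ elements of $C_j$ below $B$ but not below $A$. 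Using that $A$ and $B$ are antichains and that these two blocks of $k+1$ chain-elements lie on "opposite sides" of the two antichains, one shows the first block is entirely incomparable to the second block, producing the forbidden $(\mathbf{k{+}1})+(\mathbf{k{+}1})$. Hence $\Phi(\cgA)$ is an antichain in $\ZZ^w$ with no two $k$-crossing vectors, so $|\cgA|=|\Phi(\cgA)|\le f(k,w)$, and since $\cgA$ was an arbitrary antichain in $\cgM(P)$, the width of $\cgM(P)$ is at most $f(k,w)$.

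The main obstacle I expect is pinning down the right definition of $\Phi$ and proving the implication in the second step cleanly. The subtlety is that "below $A$" for a maximum antichain is governed by the down-set/up-set decomposition $P = D(A)\cup A\cup U(A)$, and one must be careful about elements of the chains that lie in $A$ itself versus strictly below; choosing the statistic so that incomparable maximum antichains always cross by exactly the right amount, and so that a $(k{+}1)$-crossing genuinely forces two disjoint $(k{+}1)$-chains with no comparabilities between them, is where the combinatorial work lies. The lattice structure of $\cgM(P)$ — in particular that for incomparable $A,B$ both $A\wedge B < A,B < A\vee B$ strictly — should be exactly what is needed to locate the two long chains on opposite sides and guarantee incomparability across them.
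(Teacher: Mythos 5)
Your overall strategy is the paper's: cover $P$ by $w$ chains via Dilworth, record for each maximum antichain $A$ its position on each chain, and argue that a crossing of the resulting vectors forces a forbidden $\bfkpi+\bfkpi$ in $P$. (The paper simply uses the index $A'[i]$ of the unique element of $A\cap C_i$ within $C_i$; since the number of chains equals the width, $A\cap C_i$ is never empty, and your ``number of elements of $C_i$ strictly below $A$'' equals $A'[i]-1$, so the two statistics are interchangeable.) The translation of incomparability in $\cgM(P)$ into $1$-crossing of the vectors is indeed routine, and the lattice structure of $\cgM(P)$ that you invoke is not actually needed anywhere: the antichain property of $A$ and $B$ alone carries the whole argument.

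However, your key step is off by one, and as stated it does not prove the proposition. You derive $\bfkpi+\bfkpi$ only from a $(k{+}1)$-crossing of the vectors (gaps of at least $k+1$ in both coordinates), which would show that the image family has no two $(k{+}1)$-crossing vectors and hence only that the width of $\cgM(P)$ is at most $f(k+1,w)$. What is needed is that a $k$-crossing already forces the forbidden subposet, and your counting cannot deliver this: the elements of $C_i$ that are below $A$ but not below $B$ form the half-open interval from $b_i=B\cap C_i$ to $a_i=A\cap C_i$, which has exactly $A'[i]-B'[i]$ elements --- only $k$ of them when the crossing is by exactly $k$. The fix is to take the closed interval $c_{i,B'[i]},\ldots,c_{i,A'[i]}$, including both antichain elements $b_i$ and $a_i$; an index gap of $k$ then yields $k+1$ elements, and likewise on $C_j$. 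Cross-incomparability of the two closed intervals is immediate: if some $x$ with $b_i\le x\le a_i$ satisfied $x\le y$ for some $y$ with $a_j\le y\le b_j$, then $b_i\le b_j$ would contradict $B$ being an antichain, and symmetrically $y\le x$ would force $a_j\le a_i$. With this correction (and with the map applied, as you correctly do, to an antichain of $\cgM(P)$ rather than to all of $\cgM(P)$), your argument becomes exactly the paper's.
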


\begin{proof}
By Dilworth's theorem \cite{Dil50}, $P$ can be covered with $w$ chains $C_1,\ldots,C_w$.
Each of them intersects each antichain $A\in\cgM(P)$.
Enumerate the elements of each chain $C_i$ as $c_{i,1},\ldots,c_{i,|C_i|}$ according to their order in the chain.
For an antichain $A\in\cgM(P)$, define a vector $A'\in\ZZ^w$ so that $A'[i]$ is the height in $C_i$ of the element common to both $A$ and $C_i$, that is, $A\cap C_i=\{c_{i,A'[i]}\}$ for $1\leq i\leq w$.
Clearly, for every antichain $\cgA\subset\cgM(P)$, the family of vectors $\cgA'=\{A'\colon A\in\cgA\}$ is an antichain in $\ZZ^w$.
Moreover, no two vectors in $\cgA'$ are $k$-crossing: if $A'[i]-B'[i]=k_1\geq k$ and $B'[j]-A'[j]=k_2\geq k$ for some $A,B\in\cgA$ and $1\leq i,j\leq w$, then the elements $c_{i,B'[i]},\ldots,c_{i,A'[i]}$ and $c_{j,A'[j]},\ldots,c_{j,B'[j]}$ induce a subposet of $P$ isomorphic to $(\boldsymbol{k_1+1})+(\boldsymbol{k_2+1})$, which contradicts the assumption that $P\in\cgP(k+1,w)$.
Therefore, we have $|\cgA|=|\cgA'|\leq f(k,w)$.
\end{proof}

\begin{conjecture}[Felsner, Krawczyk, Micek]
\label{conj:poset}
Let\/ $k$ and\/ $w$ be positive integers with\/ $k\geq 2$.
The maximum width of\/ $\cgM(P)$ for a poset\/ $P\in\cgP(k,w)$ is\/ $(k-1)^{w-1}$.
\end{conjecture}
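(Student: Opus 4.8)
The plan is to derive Conjecture~\ref{conj:poset} from Conjecture~\ref{conj:main}; in fact, modulo the construction sketched in the next paragraph, the two statements are equivalent, and what one can prove unconditionally is exactly as much as is known about $f$. Since a poset $P\in\cgP(k,w)$ has no $\bfk+\bfk$, it lies in $\cgP((k-1)+1,w)$, so the Proposition applied with $k-1$ in place of $k$ (legitimate because $k\ge 2$) shows that the width of $\cgM(P)$ is at most $f(k-1,w)$. By Theorem~\ref{thm:w=3} this equals $(k-1)^{w-1}$ when $w\le 3$, which together with the lower bound below settles the conjecture in that range. For $w\ge 4$, Theorem~\ref{thm:bounds} only gives the weaker estimate that the width of $\cgM(P)$ is at most $\min\{(k-1)^w-(k-1)^2(k-2)^{w-2},\lceil w/3\rceil(k-1)^{w-1}\}$, and sharpening this to $(k-1)^{w-1}$ is precisely Conjecture~\ref{conj:main} for the parameter $k-1$.

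For the matching lower bound one must exhibit, for every $k\ge 2$ and $w$, a poset $P\in\cgP(k,w)$ whose maximum-antichain lattice $\cgM(P)$ has width at least $(k-1)^{w-1}$ --- the converse of the Proposition's construction. I would start from an extremal family $\cgA\subset\ZZ^w$ of $(k-1)^{w-1}$ pairwise $1$-crossing, pairwise non-$(k-1)$-crossing vectors furnished by Theorem~\ref{thm:bounds} and the constructions of Section~\ref{s:examples}, rescale so that $\cgA\subset\{1,\ldots,N\}^w$, and put $P$ on $w$ disjoint chains $C_i=(c_{i,1}<\cdots<c_{i,N})$, adjoining cross-chain relations $c_{i,a}<_P c_{j,b}$ for a carefully selected set of pairs so that: (i) $P$ has no $\bfk+\bfk$ --- the relations are to be made dense enough, exploiting the non-$(k-1)$-crossing property of $\cgA$, that no two incomparable $k$-chains survive; (ii) for each $A\in\cgA$ the transversal $\{c_{i,A[i]}:1\le i\le w\}$ is a maximum antichain of $P$; and (iii) no further comparabilities are created among these transversals, so that, $\cgA$ being an antichain of vectors, the transversals form an antichain of size $(k-1)^{w-1}$ in $\cgM(P)$.

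The main obstacle is the upper bound: it is equivalent to Conjecture~\ref{conj:main} for $w\ge 4$, so no complete proof can be given until that conjecture is resolved. Even the lower bound, however, requires real care, because one must add enough cross-chain relations to destroy every $\bfk+\bfk$ while adding so few that the chosen transversals remain pairwise incomparable in $\cgM(P)$. The naive first attempt --- the ``staircase'' poset on $w$ chains of length $2k-2$ with $c_{i,a}<_P c_{j,b}$ for $i<j$ exactly when $b>a$ --- does lie in $\cgP(k,w)$, but a short computation shows that its lattice of maximum antichains is then $\{(t_1,\ldots,t_w):2k-2\ge t_1\ge\cdots\ge t_w\ge 1\}$ under the product order, whose width is already strictly smaller than $(k-1)^{w-1}$ for $w=3$. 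Hence one genuinely has to work with one of the less obvious extremal families of Section~\ref{s:examples} and check $\cgP(k,w)$-membership of the associated poset by hand.
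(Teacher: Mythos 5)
The statement you were handed is a conjecture, and the paper does not prove it: its entire treatment consists of the unlabelled Proposition of Section~2 (for $P\in\cgP(k,w)$ the width of $\cgM(P)$ is at most $f(k-1,w)$) together with the remarks that Conjecture~\ref{conj:main} would imply Conjecture~\ref{conj:poset} and that Theorem~\ref{thm:w=3} therefore settles $w\le 3$. Your upper-bound derivation is exactly the paper's route: view $\cgP(k,w)$ as $\cgP((k-1)+1,w)$, apply the Proposition, and invoke Theorem~\ref{thm:w=3} for $w\le 3$ or Theorem~\ref{thm:bounds} for $w\ge 4$, where only the weaker bound $\min\{(k-1)^w-(k-1)^2(k-2)^{w-2},\lceil w/3\rceil(k-1)^{w-1}\}$ is available. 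Your conclusion that closing the case $w\ge 4$ is tantamount to proving Conjecture~\ref{conj:main} for the parameter $k-1$ is correct and is precisely the paper's position; note, though, that the paper claims a genuine two-way equivalence only between the case $k=3$ of the poset conjecture and $k=2$ of the vector conjecture, whereas your opening sentence asserts it in general, conditional on your construction.

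The one place where your write-up falls short of proving what \emph{can} be proved is the lower bound: to say the maximum width \emph{is} $(k-1)^{w-1}$ one must exhibit, for every $k\ge 2$ and $w$, a poset in $\cgP(k,w)$ realizing it, and your construction is only a sketch — the ``carefully selected set of pairs'' of cross-chain relations is never specified, so none of your conditions (i)--(iii) is verified. This is not a cosmetic omission. Any uniform rule of the form $c_{i,a}<_P c_{j,b}$ iff $b-a$ exceeds a fixed threshold confines the candidate transversals to a diagonal band $\{t:\max_i t_i-\min_i t_i\le k-2\}$, and the width of that band under the product order is already strictly less than $(k-1)^{w-1}$ for $w=3$ (for $k=3$, $w=3$ it is $3$, not $4$) — this is the same phenomenon as your staircase computation. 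One genuinely has to let the comparability threshold depend on the pair of chains (or on position), so as to encode one of the asymmetric extremal families of Section~\ref{s:bounds} or Section~\ref{s:examples}, and then recheck $\bfk+\bfk$-freeness and the identification of $\cgM(P)$ with the vector family. The paper supplies no such construction either, so your proposal is at parity with the source on this point; but as a self-contained argument the lower bound remains a gap, and the conjecture as a whole remains open for $w\ge 4$.
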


This conjecture was made prior to the formulation of Conjecture \ref{conj:main}.
It follows from Proposition \ref{prop:relation} that Conjecture \ref{conj:main} implies Conjecture \ref{conj:poset}.
In particular, in view of Theorem \ref{thm:w=3}, Conjecture \ref{conj:poset} is true for $w\leq 3$.
In the special case $k=2$, since $\cgP(2,w)$ is the class of interval orders of width $w$, Conjecture \ref{conj:poset} states the well-known fact that the maximum antichains in an interval order form a chain.
Moreover, we are able to prove that Conjecture \ref{conj:poset} for $k=3$ and Conjecture \ref{conj:main} for $k=2$ are equivalent.

\section{General bounds}
\label{s:bounds}

The purpose of this section is to give the proof of Theorem \ref{thm:bounds} and the lower bound of Theorem \ref{thm:w=3}, namely, that we have
\begin{alignat*}{2}
f(k,w)&\geq k^{w-1}&&\quad\text{for $k,w\geq 1$},\\
f(k,w)&\leq\min\{k^w-k^2(k-1)^{w-2},\lceil\tfrac{w}{3}\rceil k^{w-1}\}&&\quad\text{for $w\geq 4$ and $k\geq 1$}.
\end{alignat*}

Note that $f(k,1)=k^0=1$ for every $k\geq 1$, as all antichains in $\ZZ^1$ are of size~$1$.
Also, $f(1,w)=1^{w-1}=1$ for every $w\geq 1$, as in this case every pair of distinct vectors is required to be simultaneously $1$-crossing and non-$1$-crossing.

For the lower bound, observe that the following family is a $k$-crossing-free antichain in $\ZZ^w$ and has size $k^{w-1}$:
\[
\{A\in\ZZ^w\colon\text{$0\leq A[i]\leq k-1$ for $1\leq i\leq w-1$, and $A[1]+\cdots+A[w]=0$}\}.
\]

For the upper bound, we start by an easy argument that yields the bound of $k^w$.
Let $\cgA$ be a $k$-crossing-free antichain in $\ZZ^w$.
For each vector $A\in\cgA$, let $\sigma(A)$ be the vector from $\{0,\ldots,k-1\}^w$ such that $A[i]\equiv\sigma(A)[i]\pmod{k}$ for $1\leq i\leq w$.
If $\sigma(A)=\sigma(B)$ for distinct vectors $A,B\in\cgA$, then any two coordinates $i$ and $j$ such that $A[i]>B[i]$ and $B[j]>A[j]$ (which must exist, as $\cgA$ is an antichain) witness that $A$ and $B$ are $k$-crossing.
It follows that $\sigma$ is an injection.
Since the size of the range of $\sigma$ is $k^w$, we have $|\cgA|\leq k^w$.

We obtain better upper bounds using the following recursive formula.

\begin{claim}\label{cla:recursive}
For\/ $w\geq 2$ and\/ $k\geq 1$, we have
\begin{align*}
&\text{$f(k,w)\leq k^{w-v}f(k,v)+k^vf(k,w-v)$,\quad for $1\leq v<w$,}\\
&\text{$f(k,w)\leq k^{w-1}+(k-1)f(k,w-1)$.}
\end{align*}
\end{claim}

\begin{proof}
Let $\cgA$ be a $k$-crossing-free antichain in $\ZZ^w$.
When $A\in\cgA$ and $1\leq i\leq j\leq w$, we will use $A[i,\ldots,j]$ as a convenient notation for the vector $(A[i],\ldots,A[j])$ in $\ZZ^{j-i+1}$.

Fix a residue class $(r_1,\ldots,r_{w-v})\in\{0,\ldots,k-1\}^{w-v}$, and consider the family $\cgA'$ of all vectors $A\in\cgA$ such that $A[i]\equiv r_i\pmod{k}$ for $1\leq i\leq w-v$.
For any distinct vectors $A,B\in\cgA'$, we have $A[w-v+1,\ldots,w]\neq B[w-v+1,\ldots,w]$, as otherwise $A$ and $B$ would be $k$-crossing.
Let $\cgA''=\{A[w-v+1,\ldots,w]\colon A\in\cgA'\}$.
The maximal vectors in $\cgA''$ form a $k$-crossing-free antichain, so there are at most $f(k,v)$ of them.
Color the vectors $A\in\cgA'$ such that $A[w-v+1,\ldots,w]$ is maximal in $\cgA''$ \emph{red} and the remaining vectors in $\cgA'$ \emph{blue}.
Hence there are at most $f(k,v)$ red vectors $\cgA'$ for the fixed residue class and at most $k^{w-v}f(k,v)$ red vectors in $\cgA$ altogether.

Now, fix a residue class $(r_{w-v+1},\ldots,r_w)\in\{0,\ldots,k-1\}^v$, and consider the family $\cgA'$ of all blue vectors $A\in\cgA$ such that $A[i]\equiv r_i\pmod{k}$ for $w-v+1\leq i\leq w$.
For any distinct vectors $A,B\in\cgA'$, we have $A[1,\ldots,w-v]\neq B[1,\ldots,w-v]$, as otherwise $A$ and $B$ would be $k$-crossing.
Let $\cgA''=\{A[1,\ldots,w-v]\colon A\in\cgA'\}$.
We show that $\cgA''$ is an antichain.
Suppose to the contrary that there are two vectors $A,B\in\cgA'$ such that $A[1,\ldots,w-v]<B[1,\ldots,w-v]$.
The vectors $A[w-v+1,\ldots,w]$ and $B[w-v+1,\ldots,w]$ are distinct, as otherwise we would have $A<B$, and comparable in $\ZZ^v$, as otherwise $A$ and $B$ would be $k$-crossing.
Hence we have $A[w-v+1,\ldots,w]>B[w-v+1,\ldots,w]$.
In particular, there is a coordinate $j\in\{w-v+1,\ldots,w\}$ such that $A[j]>B[j]$, which implies $A[j]-B[j]\geq k$.
By the definition of the coloring, there is a red vector $A'\in\cgA$ such that
\begin{enumeratei}
\item $A'[i]\equiv A[i]\pmod{k}$ for $1\leq i\leq w-v$, and
\item\label{item:recursive-ii} $A'[w-v+1,\ldots,w]>A[w-v+1,\ldots,w]$.
\end{enumeratei}
There is a coordinate $i\in\{1,\ldots,w-v\}$ such that $A'[i]<A[i]$, as otherwise we would have $A'>A$.
This implies $A[i]-A'[i]\geq k$.
This is a contradiction: we have $B[i]-A'[i]\geq A[i]-A'[i]\geq k$ and $A'[j]-B[j]\geq A[j]-B[j]\geq k$, so $A'$ and $B$ are $k$-crossing.
We have thus shown that $\cgA''$ is indeed an antichain.
Since $\cgA''$ is $k$-crossing-free, we have $|\cgA''|\leq f(k,w-v)$.
Hence $|\cgA'|\leq f(k,w-v)$ for the fixed residue class, and there are at most $k^vf(k,w-v)$ blue vectors in $\cgA$ altogether.

We conclude that the total number of red and blue vectors in $\cgA$ is at most $k^{w-v}f(k,v)+k^vf(k,w-v)$, as is required for the first inequality.
For the second one, if $v=1$, then it is enough to consider residue classes of $A[w]$ modulo $k-1$ instead of $k$ in the second part of the argument.
This is because $A'[w]-B[w]\geq k$ will follow from $A[w]-B[w]\geq k-1$ and the strict inequality $A'[w]>A[w]$ (see \ref{item:recursive-ii} above).
\end{proof}

From the second inequality of Claim \ref{cla:recursive} and the fact that $f(k,1)=1$, it follows that $f(k,w)\leq k^w-(k-1)^w$.
This bound is better than both $k^w$ and $wk^{w-1}$.
With the equality $f(k,3)=k^2$ of Theorem \ref{thm:w=3}, we get an even better bound
\[f(k,w)\leq k^w-k^2(k-1)^{w-2}\quad\text{for $w\geq 2$ and $k\geq 1$}.\]
The first inequality of Claim \ref{cla:recursive} applied recursively with $v=3$ and $f(k,3)=k^2$ give an upper bound
\[f(k,w)\leq\lceil\tfrac{w}{3}\rceil k^{w-1}\quad\text{for $w\geq 3$ and $k\geq 1$}.\]

\section{The case \texorpdfstring{$w\leq 3$}{w<=3}}
\label{s:proof-w=3}

In this section, we prove Theorem \ref{thm:w=3}, namely, that we have
\[f(k,w)=k^{w-1}\quad\text{for $1\leq w\leq 3$ and $k\geq 1$}.\]

As explained at the beginning of the previous section, the equality holds for $w=1$ or $k=1$.
Therefore, for the rest of this section, we assume that $2\leq w\leq 3$ and $k\geq 2$.
We only need to show that $f(k,w)\leq k^{w-1}$, as the converse inequality is proved in the previous section.
We start by the following easy proposition, stated for emphasis.

\begin{proposition}\label{prop:fix}
Let\/ $w\geq 2$, and let\/ $\cgA$ be an antichain in\/ $\ZZ^w$.
If\/ $S\subset\{1,\ldots,w\}$, $|S|=w-2$, and\/ $A[i]=B[i]$ for any\/ $A,B\in\cgA$ and every\/ $i\in S$, then the two remaining coordinates\/ $j,j'\in\{1,\ldots,w\}\setminus S$ determine two linear orders on\/ $\cgA$, one dual to the other.
That is, if we set\/ $n=|\cgA|$, then there is a labeling\/ $A_1,\ldots,A_n$ of the vectors in\/ $\cgA$ such that
\[\text{$A_1[j]<\cdots<A_n[j]$\quad and\quad $A_1[j']>\cdots>A_n[j']$.}\]
In particular, $A_1$ and\/ $A_n$ are\/ $(n-1)$-crossing.
\end{proposition}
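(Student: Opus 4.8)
The plan is to reduce everything to the two coordinates $j,j'$. Since every vector of $\cgA$ has the same entry on each coordinate in $S$, two vectors of $\cgA$ can differ only in the coordinates $j$ and $j'$. The driving elementary observation is that two distinct vectors of $\ZZ^w$ that agree on all but one coordinate are comparable in the product order; together with the hypothesis that $\cgA$ is an antichain, this yields everything.

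First I would show that the projection $A\mapsto A[j]$ is injective on $\cgA$. If $A,B\in\cgA$ are distinct with $A[j]=B[j]$, then $A$ and $B$ agree on $S\cup\{j\}$, hence differ only in coordinate $j'$; but then $A\le B$ or $B\le A$, contradicting that $\cgA$ is an antichain. The same argument shows $A\mapsto A[j']$ is injective. Next I would establish the duality. Take distinct $A,B\in\cgA$ with $A[j]<B[j]$. Since $A$ and $B$ are incomparable, $A\not\le B$, so there is a coordinate $i$ with $A[i]>B[i]$; as $A$ and $B$ agree on $S$ and $A[j]<B[j]$, the only possibility is $i=j'$, i.e.\ $A[j']>B[j']$. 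Consequently the linear order on $\cgA$ induced by the $j$-th coordinate (which is a genuine total order by injectivity and the fact that $\ZZ$ is linearly ordered) is exactly the reverse of the one induced by the $j'$-th coordinate. Labeling the vectors $A_1,\ldots,A_n$ so that $A_1[j]<\ldots<A_n[j]$ then forces $A_1[j']>\ldots>A_n[j']$.

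Finally, for the last sentence: $A_1[j],\ldots,A_n[j]$ are $n$ distinct integers in strictly increasing order, so $A_n[j]-A_1[j]\ge n-1$, and likewise $A_1[j']-A_n[j']\ge n-1$. These two coordinates witness that $A_1$ and $A_n$ are $(n-1)$-crossing. There is no substantive obstacle in this proof; the only point requiring care is the base observation that distinct vectors agreeing on all but one coordinate are comparable, which makes both coordinate projections injective and, via the antichain property, pins down the dual orders.
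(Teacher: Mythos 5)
Your proof is correct and is precisely the routine argument the authors had in mind; the paper states this proposition without proof, calling it easy and including it only ``for emphasis.'' Your key observation---that two distinct vectors agreeing on all but one coordinate are comparable, which forces injectivity of both projections and the dual orders via the antichain property---is exactly the right reduction, and the final $(n-1)$-crossing claim follows as you say from $n$ distinct integers spanning at least $n-1$.
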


It follows immediately from Proposition \ref{prop:fix} that $f(k,2)\leq k$.
Therefore, for the remainder of the argument, we fix $w=3$ and show that $f(k,3)\leq k^2$ for $k\geq 2$.

We say that a $k$-crossing-free antichain $\cgA$ in $\ZZ^w$ is \emph{compressed} on the $i$th coordinate when $A[i]\geq 0$ for all $A\in\cgA$ and the quantity $\sum_{A\in\cgA}A[i]$ is minimized over all $k$-crossing-free antichains of the same size.
Let $\cgA$ be a $k$-crossing-free antichain in $\ZZ^3$ compressed on the third coordinate.
It follows that $Q_3=\{A[3]\colon A\in\cgA\}$ is an interval of non-negative integers starting from $0$.
By Proposition \ref{prop:fix}, the subfamily of $\cgA$ consisting of all vectors $A$ with $A[3]=s$ has size at most $k$ for any $s\geq 0$.
We conclude that $|\cgA|\leq k^2$ if $|Q_3|\leq k$.
Thus, for the remainder of the argument, we assume $|Q_3|>k$.

Now, we use coordinate $3$ to define a directed graph $D$ whose vertices are the vectors in $\cgA$.
The edges in $D$ are of two types:\ \emph{short} and \emph{long}.
\begin{enumeratei}
\item $D$ has a short edge from $A$ to $B$ when $A[3]-B[3]=1$ and $A[i]\leq B[i]$ for $i\in\{1,2\}$.
\item $D$ has a long edge from $A$ to $B$ when $B[3]-A[3]=k-1$ and there is a coordinate $i\in\{1,2\}$ for which $A[i]-B[i]\geq k$.
\end{enumeratei}

\begin{claim}
For every\/ $A\in\cgA$, there is a path\/ $(A_0,\ldots,A_p)$ in\/ $D$ with\/ $A_0=A$ and\/ $A_p[3]=0$.
\end{claim}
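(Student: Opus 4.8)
The plan is to produce the path using short edges only, which reduces the statement to a single local fact: every vector $A\in\cgA$ with $A[3]\ge 1$ has a short out-edge in $D$. Granting this, the path is built greedily: set $A_0=A$ and, as long as the current vertex $A_p$ satisfies $A_p[3]\ge 1$, follow a short edge to a vertex $A_{p+1}$, which automatically has $A_{p+1}[3]=A_p[3]-1$; after exactly $A[3]$ steps one reaches a vertex with third coordinate $0$. Since the third coordinates strictly decrease along the walk, its vertices are distinct and it is a genuine path.

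To prove the local fact I would argue by contradiction, using the compression hypothesis. Suppose $A\in\cgA$, $s:=A[3]\ge 1$, and $A$ has no short out-edge. Form the family $\cgA'$ obtained from $\cgA$ by replacing $A$ with $A'=(A[1],A[2],s-1)$. Here $A'$ agrees with $A$ on the first two coordinates and is not already in $\cgA$ (otherwise $\cgA$ would contain the comparable pair $A'<A$), so $|\cgA'|=|\cgA|$, and every vector of $\cgA'$ still has non-negative third coordinate because $s-1\ge 0$. The crux is to check that $\cgA'$ is an antichain: if some $C\in\cgA\setminus\{A\}$ were comparable with $A'$, then $C\le A'$ is impossible since it would force $C<A$, so $A'\le C$; combining $A[i]=A'[i]\le C[i]$ for $i\in\{1,2\}$ with the incomparability of $A$ and $C$ forces $A[3]>C[3]$, and together with $s-1\le C[3]$ this gives $C[3]=s-1$. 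But then $A[3]-C[3]=1$ and $A[i]\le C[i]$ for $i\in\{1,2\}$, so $D$ has a short edge from $A$ to $C$, contrary to assumption. Hence $\cgA'$ is an antichain of the same size as $\cgA$, with non-negative third coordinates and $\sum_{B\in\cgA'}B[3]=\sum_{B\in\cgA}B[3]-1$, contradicting that $\cgA$ is compressed on the third coordinate.

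I expect this local fact to be the main (and essentially the only) obstacle; its proof rests on the insight that the global extremal property of being compressed can be exploited locally — a vertex with positive third coordinate and no short out-edge could be pushed one step down without destroying the antichain, improving the compressed configuration. Everything else, namely the greedy descent and its termination, is immediate. Note also that long edges are not needed for this particular claim; they enter only in the later analysis of $D$.
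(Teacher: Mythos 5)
There is a genuine gap. Your reduction to the ``local fact'' that every $A$ with $A[3]\ge 1$ has a \emph{short} out-edge is not established by your compression argument, and the fact itself is almost certainly false. The minimization defining compression is taken over antichains of the same size \emph{that also contain no two $k$-crossing vectors} (and have non-negative third coordinates) --- this is the only sensible reading, and the paper's own proof explicitly verifies the non-$k$-crossing property of the modified family. Your modified family $\cgA'=(\cgA\setminus\{A\})\cup\{A'\}$ with $A'=(A[1],A[2],A[3]-1)$ is checked to be an antichain, but you never check that it has no two $k$-crossing vectors, and in general it need not: if there is $C\in\cgA$ with $C[3]-A[3]=k-1$ and $A[i]-C[i]\ge k$ for some $i\in\{1,2\}$ (which is compatible with $A$ and $C$ being non-$k$-crossing), then after the push-down $C[3]-A'[3]=k$ and $A'[i]-C[i]\ge k$, so $A'$ and $C$ are $k$-crossing and $\cgA'$ leaves the class over which the sum is minimized --- no contradiction with compression. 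This is exactly the situation the \emph{long} edges of $D$ are designed to capture: a long edge from $A$ to $C$ records that $A[3]$ cannot be decreased unless $C[3]$ is decreased too.

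The paper's proof therefore decreases the third coordinate of the \emph{entire} set of vectors reachable from $A$ in $D$ via both short and long edges; closure under short edges preserves the antichain property and closure under long edges preserves the non-$k$-crossing property, and only then does compression yield a contradiction. That is also why the claim asserts only a path in $D$ (possibly using long edges, which go \emph{up} in the third coordinate), not a short path; the existence of short subpaths is extracted separately and more delicately in the next claim by choosing length-minimizing paths. Your greedy descent along short edges proves a stronger statement than the claim, and that stronger statement is not available.
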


\begin{proof}
The statement is trivial for $A\in\cgA$ with $A[3]=0$.
Suppose the conclusion of the claim is false for some vector $A\in\cgA$ with $A[3]>0$.
Let $\cgB$ denote the subfamily of $\cgA$ consisting of $A$ and the vectors $B$ in $\cgA$ for which there is a directed path from $A$ to $B$ in $D$.
Decrease coordinate $3$ of each vector in $\cgB$ by $1$, thus obtaining a family $\cgB'$.
The family $\cgA'=(\cgA\setminus\cgB)\cup\cgB'$ has the same size as $\cgA$, is an antichain, contains no two $k$-crossing vectors, uses only non-negative coordinates, and satisfies $\sum_{A\in\cgA'}A[i]<\sum_{A\in\cgA}A[i]$.
This contradicts the choice of $\cgA$ and completes the proof of the claim.
\end{proof}

\begin{claim}
\label{cla:2}
For every\/ $A\in\cgA$ with\/ $A[3]\geq k$, there is a path\/ $(U_0,\ldots,U_k)$ in\/ $D$ such that\/ $U_0[3]=A[3]$ and\/ $(U_m,U_{m+1})$ is a short edge in\/ $D$ for\/ $0\leq m\leq k-1$.
\end{claim}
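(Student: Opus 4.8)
The plan is to apply the preceding claim to $A$, obtaining a directed path $(A_0,\dots,A_p)$ in $D$ with $A_0=A$ and $A_p[3]=0$, and then to extract from it a block of $k$ consecutive short edges whose top vertex lies at level $A[3]$. Write $a_j=A_j[3]$, so $a_0=A[3]\ge k$ and $a_p=0$. Since a long edge strictly increases the third coordinate, the first step is to observe that the least index $q$ with $a_q\le A[3]-k$ actually satisfies $a_q=A[3]-k$, that the edge $A_{q-1}\to A_q$ is short with $a_{q-1}=A[3]-k+1$, and that $a_j\ge A[3]-k+1$ for every $j<q$.

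Next I would trace backwards from $A_q$ along short edges as far as possible: let $r\ge 1$ be largest such that all of the edges $A_{q-r}\to A_{q-r+1}\to\dots\to A_q$ are short. Each backward step along this block raises the level by exactly one, so $a_{q-r}=A[3]-k+r$. The key point is that $r\ge k$. Indeed, by maximality of $r$, the vertex $A_{q-r}$ is either $A_0$ or the head of a long edge $A_{q-r-1}\to A_{q-r}$. In the first case $a_{q-r}=a_0=A[3]$, which forces $r=k$. In the second case $a_{q-r-1}=a_{q-r}-(k-1)=A[3]-2k+r+1$; but $q-r-1$ is a nonnegative index smaller than $q$, so minimality of $q$ gives $a_{q-r-1}\ge A[3]-k+1$, hence again $r\ge k$.

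Once $r\ge k$ is established (so $q\ge r\ge k$), I would set $U_m:=A_{q-k+m}$ for $0\le m\le k$; these vertices all lie inside the block of short edges, hence each $(U_m,U_{m+1})$ is a short edge, and $U_0=A_{q-k}$ satisfies $U_0[3]=a_q+k=A[3]$, which is exactly what the claim asks for. The one genuinely delicate point — and the reason a blunt pigeonhole count on the short edges of the whole path does not suffice — is that we need the block to begin precisely at level $A[3]$ rather than merely somewhere above it; it is the choice of $q$ as the \emph{first} moment the path drops below $A[3]-k$ that makes the case analysis on $r$ go through.
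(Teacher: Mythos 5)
Your argument is correct, but it reaches the conclusion by a genuinely different route than the paper. The paper does not analyze an arbitrary path: it minimizes, over all $X\in\cgA$ with $X[3]=A[3]$, the length of a shortest path in $D$ from $X$ down to level $0$, and then argues by contradiction that the first $k$ edges of this extremal path must be short --- a long edge among the first $k$ would lift the path back to a level $\ge A[3]$, so the path would later revisit level $A[3]$ and its tail would be a strictly shorter path starting at that level, contradicting minimality. You instead take the single path supplied by the preceding claim and run a first-passage analysis on it: you locate the first index $q$ at which the level drops to $A[3]-k$, observe that the maximal block of consecutive short edges ending at $A_q$ has length $r$ with $a_{q-r}=A[3]-k+r$, and rule out $r<k$ because the tail of a long edge entering that block would sit at level $A[3]-2k+r+1<A[3]-k+1$, violating the minimality of $q$. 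Both proofs exploit the same arithmetic fact (short edges step down by $1$, long edges jump up by $k-1$, and intermediate levels are not skipped), but yours is constructive and local --- it extracts the desired block from any given path --- whereas the paper's is a global extremal argument over all starting vertices at level $A[3]$. Your closing observation is also well taken: the claim only requires $U_0[3]=A[3]$, not $U_0=A$, and both proofs rely on this slack, since the short path produced need not start at $A$ itself. The one point worth stating explicitly in a final write-up is that every edge of $D$ is by definition either short or long, which is what justifies your dichotomy for the edge $A_{q-r-1}\to A_{q-r}$; with that said, I see no gap.
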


\begin{proof}
Fix $A\in\cgA$ with $A[3]\geq k$.
For each $U\in\cgA$ with $U[3]=A[3]$, consider the length of a shortest path $P=(U_0,\ldots,U_p)$ in $D$ from $U$ to a vertex $U_p$ with $U_p[3]=0$.
Of all such $U$ and $U_p$, take those for which the length $p$ of the path $P$ is minimized.
We show that the first $k+1$ vectors on the chosen path satisfy the requirements of the claim.
Suppose to the contrary that there is $m$ with $0\leq m\leq k-1$ for which the edge $(U_m,U_{m+1})$ is long.
Then $U_{m+1}[3]\geq A[3]$, and it follows that there is an integer $n$ with $m+1\leq n<p$ for which $U_n[3]=A[3]$.
This contradicts the choice of $P$ and completes the proof of the claim.
\end{proof}

In view of Claim \ref{cla:2}, it is natural to refer to a path $\cgP=(U_0,\ldots,U_p)$ in $D$ as a \emph{short path} when all edges on $\cgP$ are short.
Also, we say that the short edge $(U,V)$ from $D$ is \emph{expanded in coordinate\/ $i$} when $V[i]>U[i]$.
Clearly, if $(U,V)$ is a short edge in $D$, then it is expanded in one or both of coordinates $1$ and $2$ (as $U$ and $V$ are $1$-crossing).

Let $\cgA_s=\{A\in\cgA\colon A[3]\equiv s\pmod{k}\}$ for $0\leq s\leq k-1$.
To complete the proof, we show that $|\cgA_s|\leq k$ for $0\leq s\leq k-1$.
Thus, for the remainder of the argument, we fix an integer $s$ with $0\leq s\leq k-1$.
Let $r$ be the largest integer for which there is a vector $A\in\cgA$ with $A[3]=s+(r-1)k$, and let $\cgA_s=\cgB_1\cup\cdots\cup\cgB_r$ be the natural partition of $\cgA_s$ such that $A[3]=s+(j-1)k$ for each $A\in\cgB_j$.
We can assume that $r\geq 2$, as otherwise the conclusion that $|\cgA_s|\leq k$ follows from Proposition~\ref{prop:fix}.

For $1\leq j\leq r$, we refer to $\cgB_j$ as \textit{level} $j$ of $\cgA_s$.
Also, for $1\leq j\leq r-1$, we apply Claim \ref{cla:2} and choose a short path $\cgP_j$ of $k+1$ vectors starting at a vector $X_{j+1}\in\cgB_{j+1}$ and ending at a vector $Y_j\in\cgB_j$.

\begin{claim}
For\/ $2\leq j\leq r-1$, we have\/ $X_j\neq Y_j$.
\end{claim}

\begin{proof}
Suppose to the contrary that for some $j$ with $2\leq j\leq r-1$ we have $X_j=Y_j$.
The ending point of the short path $\cgP_j$ is the same as the
starting point of the short path $\cgP_{j-1}$.
It follows that the union of these two paths is a short path of $2k+1$ vectors starting at the vector $X_{j+1}$ and ending at the vector $Y_{j-1}$.
Denote the vectors on this path by $\cgP=(U_0,\ldots,U_{2k})$, where $U_0=X_{j+1}$ and $U_{2k}=Y_{j-1}$.
We have 
\[U_0[i]\leq\cdots\leq U_{2k}[i]\quad\text{for $i\in\{1,2\}$}.\]
Furthermore, for $0\leq m\leq 2k-1$, the short edge $(U_m, U_{m+1})$ is expanded in some coordinate $i\in\{1,2\}$.
Since there are $2k$ short edges on $\cgP$, at least $k$ of them are expanded in some coordinate $i\in\{1,2\}$.
It follows that $U_{2k}[i]-U_0[i]\geq k$.
Since $U_0[3]-U_{2k}[3]=2k$, we conclude that $U_0$ and $U_{2k}$ are $k$-crossing.
This contradiction completes the proof of the claim.
\end{proof}

Let $j$ be an integer with $1\leq j\leq r$.
Since $A[3]=s+(j-1)k$ for all $A\in\cgB_j$, we know from Proposition \ref{prop:fix} that (a)~each of the first two coordinates determines a linear order on $\cgB_j$, and (b)~these two linear orders are dual.
In particular, if $2\leq j\leq r-1$, then there is a unique $i\in\{1,2\}$ for which $X_j[i]>Y_j[i]$.

Now let $i\in\{1,2\}$.
An interval $B=[p,t]$ of consecutive integers from $[1,r-1]$ is called a \textit{block of type\/ $i$} when the following conditions are satisfied:
\begin{enumeratei}
\item $p=1$ or $X_p[i]<Y_p[i]$;
\item $X_j[i]>Y_j[i]$ for all $j\in(p,t]$;
\item $t=r-1$ or $X_{t+1}[i]<Y_{t+1}[i]$.
\end{enumeratei}
The blocks of type $i$ form a partition of the integer interval $[1,r-1]$.
In particular, every $j\in[1,r-1]$ belongs to two blocks, one of each type.
Moreover, for every $j\in[1,r-2]$, there is a unique $i$ such that $j$ and $j+1$ belong together to a block of type $i$.
This implies that there are exactly $r$ blocks altogether.
When $r=2$, the singleton set $\{1\}$ is a block of both types, as the three conditions listed above are satisfied vacuously, and it is counted twice.

Choose $j$ with $1\leq j\leq r-1$.
Let $\cgP_j=(U_0,\ldots,U_k)$.
For $i\in\{1,2\}$, let $B_i$ be the block of type $i$ containing $j$, that is, $B_i=[p_i,t_i]$ with $p_i\leq j\leq t_i$.
When a short edge $(U_m,U_{m+1})$ with $0\leq m\leq k-1$ is expanded in coordinate $i$, we say that $(U_m,U_{m+1})$ is \emph{expanded in\/ $B_i$}.
Each of the short edges $(U_m,U_{m+1})$, for $0\leq m\leq k-1$, is expanded in at least one of $B_1$ and~$B_2$.

Now, choose $j$ with $1\leq j\leq r$.
Let $U$ and $V$ be distinct vectors in $\cgB_j$ that occur consecutively in the two linear orders induced by coordinates $1$ and $2$.
We say that the pair $(U,V)$ \emph{contributes a space} to a block $B=[p,t]$ of type $i$ when one of the following three conditions is satisfied:
\begin{enumeratei}
\item $j=p$ and $U[i]>V[i]\geq Y_j[i]$;
\item $p+1\leq j\leq t$ and $X_j[i]\geq U[i]>V[i]\geq Y_j[i]$;
\item $j=t+1$ and $X_j[i]\geq U[i]>V[i]$.
\end{enumeratei}

\begin{claim}
\label{cla:4}
Let\/ $j$ be an integer with\/ $1\leq j\leq r$.
If\/ $U,V\in\cgB_j$ are consecutive in the linear orders on\/ $\cgB_j$ determined by coordinates\/ $1$ and\/ $2$, then exactly one of\/ $(U,V)$ and\/ $(V,U)$ contributes a space to a block and that block is unique.
\end{claim}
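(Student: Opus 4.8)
The plan is to fix $j$ and the unordered pair $\{U,V\}$ and, after a short structural analysis, to reduce the claim to the fact that three intervals (or, at the two extreme levels, two) partition an integer interval. By Proposition~\ref{prop:fix}, coordinates $1$ and $2$ induce dual linear orders on $\cgB_j$; write $\cgB_j=\{W_1,\dots,W_m\}$ with $W_1[1]<\dots<W_m[1]$ and $W_1[2]>\dots>W_m[2]$, so that $\{U,V\}=\{W_c,W_{c+1}\}$ for a unique $c\in\{1,\dots,m-1\}$, and relabel so that $U=W_{c+1}$, $V=W_c$. Reading the three conditions defining ``contributing a space'', the ordered pair $(U,V)$ can contribute only to a block of type $1$, and $(V,U)$ only to a block of type $2$; so it is enough to show that exactly one block of type $1$ or $2$ receives a space, from the corresponding ordered pair, and that this block is unique.

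Next I would pin down, for $2\le j\le r-1$, exactly which blocks reach level $j$ and with which \emph{windows}. Let $i\in\{1,2\}$ be the unique coordinate with $X_j[i]>Y_j[i]$ (it exists, as noted above), and let $i'$ be the other coordinate. Then $j$ is an interior index of its type-$i$ block (the inequality $X_j[i]>Y_j[i]$ prevents $j$ from being a starting index of a type-$i$ block), so condition~(ii) applies there with window $[Y_j[i],X_j[i]]$ in coordinate $i$, and the same inequality shows no type-$i$ block ends at $j-1$; $j$ is a starting index of its type-$i'$ block (otherwise condition~(ii) would force $X_j[i']>Y_j[i']$), so condition~(i) applies with window $[Y_j[i'],+\infty)$ in coordinate $i'$ while condition~(ii) cannot apply for type $i'$ since $j$ is a starting, not interior, index there; and the type-$i'$ block containing $j-1$ ends exactly at $j-1$ --- because if it reached level $j$ then condition~(ii) for that block would give $X_j[i']>Y_j[i']$, contradicting the choice of $i$ --- so condition~(iii) applies with window $(-\infty,X_j[i']]$ in coordinate $i'$. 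These three are all the windows offered at level $j$. For $j=1$ only condition~(i) can apply, for the leftmost type-$1$ and type-$2$ blocks, with windows $[Y_1[1],+\infty)$ and $[Y_1[2],+\infty)$; for $j=r$ only condition~(iii) can apply, for the rightmost blocks, with windows $(-\infty,X_r[1]]$ and $(-\infty,X_r[2]]$.

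Finally I would check that the windows tile. Write $X_j=W_a$, $Y_j=W_b$; in the case $i=2$ we have $a<b$, and a routine verification (using only that coordinate $1$ increases and coordinate $2$ decreases along $W_1,\dots,W_m$) shows that $\{W_c,W_{c+1}\}$ has its coordinate-$1$ space inside $(-\infty,X_j[1]]$ iff $c\le a-1$, its coordinate-$2$ space inside $[Y_j[2],X_j[2]]$ iff $a\le c\le b-1$, and its coordinate-$1$ space inside $[Y_j[1],+\infty)$ iff $c\ge b$. Since $\{1,\dots,a-1\}$, $\{a,\dots,b-1\}$, $\{b,\dots,m-1\}$ partition $\{1,\dots,m-1\}$, exactly one of the three windows catches the pair, i.e.\ exactly one of $(U,V)$, $(V,U)$ contributes a space, and to exactly one block; the case $i=1$ is symmetric (interchange coordinates $1$ and $2$), and at $j=1$ and $j=r$ the analogous check splits $\{1,\dots,m-1\}$ into two intervals with the same conclusion. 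I expect the only delicate point to be the middle step: verifying that precisely those windows occur at level $j$ and no others, in particular that condition~(iii) is genuinely in force because the type-$i'$ block below level $j$ terminates at $j-1$. Once that is in place, the last step is merely translating the window inclusions into inequalities on $c$.
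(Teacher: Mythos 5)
Your proof is correct and follows essentially the same route as the paper's: the same split into the cases $j=1$, $j=r$, and $2\le j\le r-1$, and in the interior case the same three subcases, which the paper phrases as a trichotomy on the position of the consecutive pair relative to $X_j$ and $Y_j$ in the distinguished coordinate and you phrase as three windows tiling the index interval $\{1,\dots,m-1\}$. Your version is slightly more explicit on the uniqueness half of the claim (verifying that conditions (i)--(iii) offer no windows at level $j$ other than the three you list), which the paper leaves implicit in its exhaustive case analysis.
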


\begin{proof}
Assume without loss of generality that $U[1]>V[1]$ and $V[2]>U[2]$.

Suppose first that $j=1$.
If $U[1]>V[1]\geq Y_1[1]$, then $(U,V)$ contributes a space to the block of type $1$ containing $1$.
Otherwise, we have $V[2]>U[2]\geq Y_1[2]$ and $(V,U)$ contributes a space to the block of type $2$ containing~$1$.

The proof for the case $j=r$ is similar.
If $X_r[1]\geq U[1]>V[1]$, then $(U,V)$ contributes a space to the block of type $1$ containing $r-1$.
Otherwise, $(V,U)$ contributes a space to the block of type $2$ containing $r-1$.

Now, suppose $2\leq j\leq r-1$.
There is a unique block $B$ containing both $j-1$ and $j$.
Assume without loss of generality that $B$ is a block of type $1$.
If $X_j[1]\geq U[1]>V[1]\geq Y_j[1]$, then $(U,V)$ contributes a space to $B$.
If $U[1]>V[1]\geq X_j[1]$, then $(V,U)$ contributes a space to the block of type $2$ containing $j-1$.
Finally, if $Y_j[1]\geq U[1]>V[1]$, then $(V,U)$ contributes a space to the block of type $2$ that contains~$j$.
\end{proof}

\begin{claim}
\label{cla:5}
For every block\/ $B$, the total number of pairs that contribute a space to\/ $B$ and short edges that expand in\/ $B$ is at most\/ $k-1$.
\end{claim}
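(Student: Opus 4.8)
The plan is to fix a single block $B=[p,t]$ of type~$i$ and bound the two quantities in the statement separately, then exploit a cancellation between them. Write $E(B)$ for the number of short edges expanded in~$B$ and $P(B)$ for the number of pairs contributing a space to~$B$; the goal is $P(B)+E(B)\le k-1$. Every short edge expanded in~$B$ lies on one of the short paths $\cgP_p,\ldots,\cgP_t$ and is expanded in coordinate~$i$. Since $\cgP_j$ runs from $X_{j+1}$ to $Y_j$ and short edges never decrease coordinates~$1$ and~$2$, the $i$th coordinate weakly increases from $X_{j+1}[i]$ to $Y_j[i]$ along $\cgP_j$; as each edge expanded in coordinate~$i$ raises that coordinate by at least~$1$, the path $\cgP_j$ has at most $Y_j[i]-X_{j+1}[i]$ such edges. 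Summing over $j\in[p,t]$ and telescoping,
\[
E(B)\le\sum_{j=p}^{t}\bigl(Y_j[i]-X_{j+1}[i]\bigr)=Y_p[i]-X_{t+1}[i]-\sum_{j=p+1}^{t}\bigl(X_j[i]-Y_j[i]\bigr).
\]

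For the pair count I would argue level by level, using Proposition~\ref{prop:fix}: coordinate~$i$ is injective on each $\cgB_j$, so the vectors of $\cgB_j$ whose $i$th coordinate falls in a prescribed interval form a contiguous segment of the linear order on $\cgB_j$, and a segment with $m$ vectors carries $m-1$ consecutive pairs. Reading off the three defining conditions of ``contributes a space'', the pairs contributing at level~$p$ are exactly the consecutive pairs of $\cgB_p$ with both $i$th coordinates at least $Y_p[i]$, at an internal level $j\in(p,t]$ the consecutive pairs with both $i$th coordinates in $[Y_j[i],X_j[i]]$, and at level $t+1$ the consecutive pairs with both $i$th coordinates at most $X_{t+1}[i]$. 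Hence, putting $T=\max_{A\in\cgB_p}A[i]$ and $D=\min_{A\in\cgB_{t+1}}A[i]$, the level-$p$ contribution is at most $T-Y_p[i]$, each internal level $j$ contributes at most $X_j[i]-Y_j[i]$, and the level-$(t+1)$ contribution is at most $X_{t+1}[i]-D$.

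Adding these bounds to the one for $E(B)$, the internal sums $\sum_{j=p+1}^{t}(X_j[i]-Y_j[i])$ cancel, as do the terms $Y_p[i]$ and $X_{t+1}[i]$, and what remains is $P(B)+E(B)\le T-D$. It then suffices to show $T-D\le k-1$. Choose $U^*\in\cgB_p$ with $U^*[i]=T$ and $V^*\in\cgB_{t+1}$ with $V^*[i]=D$. As $B\subseteq[1,r-1]$ is a nonempty integer interval, $U^*[3]=s+(p-1)k$ and $V^*[3]=s+tk$, so $V^*[3]-U^*[3]=(t-p+1)k\ge k$; if $T-D\ge k$ held, then coordinates~$i$ and~$3$ would witness that $U^*$ and $V^*$ are a $k$-crossing pair in $\cgA$, contrary to hypothesis. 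Therefore $P(B)+E(B)\le T-D\le k-1$.

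The per-level pair count and the per-edge coordinate bookkeeping are routine. The one step that has to be arranged with care --- and really the only idea --- is making the telescoped edge bound line up exactly with the internal-level pair bounds, so that everything collapses to the single quantity $T-D$, which the no-$k$-crossing assumption then pins down via one pair of extreme vectors. Getting the interval endpoints $Y_j[i]$, $X_j[i]$, $T$, $D$ to match between the two counts is where the bookkeeping must be precise.
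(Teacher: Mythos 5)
Your proof is correct and takes essentially the same route as the paper's: where you bound each level and each path separately and telescope to $P(B)+E(B)\le T-D$, the paper concatenates the contributing pairs and expanded edges into one weakly decreasing sequence in coordinate $i$ running from the top of $\cgB_p$ to the bottom of $\cgB_{t+1}$ and counts unit drops, arriving at the same quantity. The final step --- ruling out $T-D\ge k$ via a $k$-crossing on coordinates $i$ and $3$ between the two extreme vectors --- is identical.
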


\begin{proof}
Let $B=[p,t]$ be a block of type $i$.
For $p\leq j\leq t+1$, let $V_j^0,\ldots,V_j^{n_j}$ be the vectors $V$ from $\cgB_j$ such that
\begin{enumeratei}
\item $V[i]\geq X_j[i]$ when $j\geq p+1$,
\item $V[i]\leq Y_j[i]$ when $j\leq t$.
\end{enumeratei}
Assume further that $V_j^0,\ldots,V_j^{n_j}$ are ordered so that $V_j^0[i]>\cdots>V_j^{n_j}[i]$.
Thus $V_j^{n_j}=X_j$ for $p+1\leq j\leq t+1$, and $V_j^{n_j}=Y_j$ for $p\leq j\leq t$.
Clearly, the pairs $(V_j^m,V_j^{m+1})$ with $p\leq j\leq t+1$ and $0\leq m\leq n_j-1$ are exactly the pairs that contribute a space to $B$.

For $p\leq j\leq t$, let $\cgP_j=(U_j^0,\ldots,U_j^k)$.
Thus $U_j^0=X_{j+1}$, $U_j^k=Y_j$, and $U_j^k[i]\geq\cdots\geq U_j^0[i]$.
Clearly, the short edges $(U_j^m,U_j^{m+1})$ with $p\leq j\leq t$, $0\leq m\leq k-1$, and $U_j^{m+1}[i]>U_j^m[i]$ are exactly the short edges that expand in $B$.

To conclude, since we have
\begin{alignat*}{8}
&V_p^0&&[i]>\cdots>V_p^{n_p}&&[i]=Y_p&&[i]=U_p^k&&[i]\geq\cdots&&\geq U_p^0&&[i]=X_{p+1}&&[i]\\
={}&V_{p+1}^0&&[i]>\cdots>V_{p+1}^{n_{p+1}}&&[i]=Y_{p+1}&&[i]=U_{p+1}^k&&[i]\geq\smash[b]{\raisebox{-11pt}{$\ddots$}}\\
&&&&&&&&&&&\geq U_t^0&&[i]=X_{t+1}&&[i]\\
={}&V_{t+1}^0&&[i]>\cdots>V_{t+1}^{n_{t+1}}&&[i],
\end{alignat*}
the total number of pairs that contribute a space to $B$ and short edges that expand in $B$ is at most $V_p^0[i]-V_{t+1}^{n_{t+1}}[i]$.
Since $V_{t+1}^{n_{t+1}}[3]-V_p^0[3]=(t-p+1)k$, we have $V_p^0[i]-V_{t+1}^{n_{t+1}}[i]\leq k-1$, as otherwise $V_p^0$ and $V_{t+1}^{n_{t+1}}$ would be $k$-crossing.
\end{proof}

We are now ready to assemble this series of claims and complete the proof that $|\cgA_s|\leq k$.
For $1\leq j\leq r$, let $b_j=|\cgB_j|$.
Thus $|\cgA_s|=b_1+\cdots+b_r$.
By Claim \ref{cla:4}, there are $b_j-1$ ordered pairs of elements from $\cgB_j$ that occur consecutively in the linear orders on $\cgB_j$ induced by coordinates $1$ and $2$ and each contributes a space to one of the $r$ blocks.
Also, each of the $(r-1)k$ short edges on the paths $\cgP_1,\ldots,\cgP_{r-1}$ is expanded in at least one block.
Thus, by Claim \ref{cla:5}, we have
\[\sum_{j=1}^r(b_j-1)+(r-1)k\leq r(k-1).\]
On the other hand, we have
\[\sum_{j=1}^r(b_j-1)=|\cgA_s|-r.\]
Composing the two, we obtain $|\cgA_s|\leq k$, which completes the proof
of Theorem \ref{thm:w=3}.

\section{Generalization}
\label{s:generalization}

For $w\geq 1$ and $1\leq k_1\leq\cdots\leq k_w$, we say that vectors $A$ and $B$ from $\ZZ^w$ are \emph{$(k_1,\ldots,k_w)$-crossing} when there are two coordinates $i$ and $j$ for which $A[i]-B[i]\geq k_i$ and $B[j]-A[j]\geq k_j$.
Let $f(k_1,\ldots,k_w;w)$ denote the maximum size of a $(k_1,\ldots,k_w)$-crossing-free antichain of vectors in $\ZZ^w$.
Thus $f(k,w)=f(k,\ldots,k;w)$.

\begin{proposition}
\label{prop:genbounds}
For\/ $w\geq 1$ and\/ $k_1,\ldots,k_w\geq 1$, we have
\[\text{$k_2\cdots k_w\leq f(k_1,\ldots,k_w;w)\leq k_1\cdots k_w$.}\]
\end{proposition}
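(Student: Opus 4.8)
The upper bound $f(k_1,\ldots,k_w;w)\le k_1\cdots k_w$ is obtained by the same residue argument already used in Section~\ref{s:bounds} for the bound $k^w$, but now with a mixed modulus. Let $\cgA$ be an antichain in $\ZZ^w$ with no two $(k_1,\ldots,k_w)$-crossing vectors. For $A\in\cgA$ define $\sigma(A)\in\{0,\ldots,k_1-1\}\times\cdots\times\{0,\ldots,k_w-1\}$ by $A[i]\equiv\sigma(A)[i]\pmod{k_i}$ for each $i$. If $\sigma(A)=\sigma(B)$ for distinct $A,B\in\cgA$, then since $\cgA$ is an antichain there are coordinates $i,j$ with $A[i]>B[i]$ and $B[j]>A[j]$; but $A[i]\equiv B[i]\pmod{k_i}$ forces $A[i]-B[i]\ge k_i$, and likewise $B[j]-A[j]\ge k_j$, so $A$ and $B$ are $(k_1,\ldots,k_w)$-crossing, a contradiction. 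Hence $\sigma$ is injective and $|\cgA|\le\prod_{i=1}^w k_i$.

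For the lower bound $f(k_1,\ldots,k_w;w)\ge k_2\cdots k_w$, I would exhibit an explicit family, mimicking the construction in Section~\ref{s:bounds}. Take
\[
\cgF=\set{A\in\ZZ^w: 0\le A[i]\le k_i-1 \text{ for } 2\le i\le w, \text{ and } A[1]+\ldots+A[w]=0}.
\]
Clearly $|\cgF|=k_2\cdots k_w$, since $A[2],\ldots,A[w]$ range freely over their boxes and $A[1]$ is then determined. It remains to check the two crossing conditions. First, $\cgF$ is an antichain: if $A\le B$ with $A\ne B$ in $\cgF$, then $A[i]\le B[i]$ for all $i$ with strict inequality somewhere, contradicting $\sum_i A[i]=0=\sum_i B[i]$. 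Second, no two vectors of $\cgF$ are $(k_1,\ldots,k_w)$-crossing: suppose $A[i]-B[i]\ge k_i$ for some $i$. Since $0\le A[i],B[i]\le k_i-1$ is impossible for $i\ge2$ (the difference is at most $k_i-1$), we must have $i=1$; similarly the coordinate $j$ with $B[j]-A[j]\ge k_j$ must also equal $1$, giving $i=j$, which is not allowed. Hence $\cgF$ witnesses the lower bound.

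The argument is entirely routine; the only point requiring a little care is the verification in the lower bound that both ``large-difference'' coordinates are forced to be the first one, which is where the hypothesis $k_1\le\cdots\le k_w$ (guaranteeing the box sides are exactly $k_2,\ldots,k_w$ rather than having to single out the smallest $k_i$) keeps the bookkeeping clean. No genuine obstacle arises here — the proposition is a direct mixed-modulus analogue of the bounds $k^{w-1}\le f(k,w)\le k^w$ established earlier.
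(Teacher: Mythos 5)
Your proof is correct and is exactly what the paper intends: the paper omits the details, saying only that Proposition~\ref{prop:genbounds} ``follows along the same lines'' as the bounds $k^{w-1}\le f(k,w)\le k^w$ from Section~\ref{s:bounds}, and your mixed-modulus residue map for the upper bound and box-with-one-free-coordinate construction for the lower bound are precisely those same lines adapted to the $(k_1,\ldots,k_w)$ setting.
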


The proof of Proposition \ref{prop:genbounds} follows along the same lines as the proof of the inequalities $k^{w-1}\leq f(k,w)\leq k^w$ at the beginning of Section \ref{s:bounds}.
We propose a conjecture which seems to be more general but turns out to be equivalent to Conjecture~\ref{conj:main}.

\begin{conjecture}
\label{conj:general}
For\/ $w\geq 1$ and\/ $1\leq k_1\leq\cdots\leq k_w$, we have
\[\text{$f(k_1,\ldots,k_w;w)=k_2\cdots k_w$.}\]
\end{conjecture}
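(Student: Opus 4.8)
The statement is an equivalence, and one direction is immediate: specializing Conjecture~\ref{conj:general} to $k_1=\dots=k_w=k$ gives $f(k,w)=f(k,\dots,k;w)=k^{w-1}$, that is, Conjecture~\ref{conj:main}. So my plan is to assume Conjecture~\ref{conj:main} and deduce $f(k_1,\dots,k_w;w)=k_2\cdots k_w$ for every $1\le k_1\le\dots\le k_w$; by Proposition~\ref{prop:genbounds} only the upper bound $f(k_1,\dots,k_w;w)\le k_2\cdots k_w$ needs an argument.

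First I would isolate a purely structural fact. Let $\cgA$ be an antichain in $\ZZ^w$ with no two $(k_1,\dots,k_w)$-crossing vectors, and group its members by the residues of coordinates $2,\dots,w$ modulo $k_2,\dots,k_w$; there are at most $k_2\cdots k_w$ groups. Two vectors in one group must differ in the first coordinate, since otherwise, differing in some later coordinate and being incomparable, they would be $(k_1,\dots,k_w)$-crossing using two later coordinates; hence a group is linearly ordered by its first coordinate. A group of size $N$ has its extreme members differing by at least $N-1$ in the first coordinate and, for some later coordinate $i_0$, by a positive multiple of $k_{i_0}$; so $N\le k_1$, lest these two extremes be $(k_1,\dots,k_w)$-crossing. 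This already gives $|\cgA|\le k_1k_2\cdots k_w$ with no appeal to Conjecture~\ref{conj:main}, and when $k_1=1$ the grouping map is injective, so the theorem holds outright in that case. The entire difficulty is therefore to delete the extra factor $k_1$ when $k_1\ge 2$, and this is where Conjecture~\ref{conj:main} must enter.

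To delete it I would induct on the number of distinct values among $k_1,\dots,k_w$. The base case, a single distinct value, is exactly Conjecture~\ref{conj:main}. In the inductive step, if the minimum $k_1$ occurs only once, monotonicity of $f$ in its arguments lets me raise $k_1$ to $k_2$, producing a tuple with one fewer distinct value but the same conjectured answer $k_2\cdots k_w$, and the inductive hypothesis closes this case. The main obstacle is the opposite case, where the minimum is repeated, say $k_1=\dots=k_a<k_{a+1}$ with $a\ge 2$: raising the $a$ equal coordinates overshoots $k_2\cdots k_w$ by a factor $(k_{a+1}/k_1)^{a-1}$, and peeling off one minimal coordinate by the recursion used to prove Claim~\ref{cla:recursive} introduces an additive error $(k_1-1)\,f(k_1,\dots,k_1,k_{a+1},\dots,k_w;w-1)$, which again misses the target. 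To handle this I expect to have to treat the $a$ coordinates sharing the minimum threshold as a genuine $a$-dimensional uniform instance, where Conjecture~\ref{conj:main} applies directly, and to splice this with an induction on $w$ for the remaining coordinates in a lossless way; engineering that splice so that nothing is wasted is the crux of the argument and the step I would expect to cost the most work.
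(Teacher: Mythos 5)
Your easy direction, your reduction to the upper bound via Proposition~\ref{prop:genbounds}, and your residue-class argument giving $|\cgA|\le k_1k_2\cdots k_w$ (with the $k_1=1$ case for free) are all correct, and you have correctly located the difficulty: removing the factor $k_1$. But at exactly that point the proposal stops being a proof. Your induction on the number of distinct thresholds only closes the case where the minimum $k_1$ occurs once; the repeated-minimum case is not a corner case you can defer --- it is unavoidable even inside your own inductive step (raising $k_1$ to $k_2$ produces a tuple whose minimum is repeated), and already for $(k,k,K)$ with $k<K$ neither monotonicity nor the peeling of Claim~\ref{cla:recursive} gets you from $f(k,3)=k^2$ to the target $kK$. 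Your closing sentence (``engineering that splice \dots is the crux \dots the step I would expect to cost the most work'') is an accurate self-assessment: the key idea is missing.

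The paper's mechanism (Proposition~\ref{prop:equiv}) is not an induction but a global averaging. For every choice of $k_1$-element subsets $I_j\subset\{0,\ldots,k_j-1\}$ for $2\le j\le w$, restrict $\cgA$ to the vectors whose $j$th residue mod $k_j$ lies in $I_j$; relabelling the $\ell$th smallest residue of $I_j$ as $\ell$ turns this subfamily into an antichain in $\ZZ^w$ with no two $k_1$-crossing vectors, so Conjecture~\ref{conj:main} bounds its size by $k_1^{w-1}$. Each $A\in\cgA$ is counted in $\tbinom{k_2-1}{k_1-1}\cdots\tbinom{k_w-1}{k_1-1}$ of the $\tbinom{k_2}{k_1}\cdots\tbinom{k_w}{k_1}$ selections, and the ratio $\tbinom{k_j}{k_1}/\tbinom{k_j-1}{k_1-1}=k_j/k_1$ converts $k_1^{w-1}$ into $k_2\cdots k_w$ in one stroke, uniformly in how the $k_j$ repeat. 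This is the step your proposal needs and does not supply.
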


\begin{proposition}
\label{prop:equiv}
Conjectures \ref{conj:main} and \ref{conj:general} are equivalent.
\end{proposition}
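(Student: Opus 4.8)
The plan is to prove the two implications of the equivalence separately, the easy one first. Specializing Conjecture~\ref{conj:general} to the case $k_1=\cdots=k_w=k$ gives $f(k,\ldots,k;w)=k^{w-1}$, and since $f(k,w)=f(k,\ldots,k;w)$ this is exactly Conjecture~\ref{conj:main}. So the whole content lies in showing that Conjecture~\ref{conj:main} implies Conjecture~\ref{conj:general}. By Proposition~\ref{prop:genbounds} the lower bound $f(k_1,\ldots,k_w;w)\ge k_2\cdots k_w$ already holds unconditionally, so it suffices to establish the matching upper bound $f(k_1,\ldots,k_w;w)\le k_2\cdots k_w$ for a sorted tuple $k_1\le\cdots\le k_w$, assuming $f(K,w)=K^{w-1}$ for every $K$.

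The idea is to "unfold" an extremal family $\cgA$ for the $(k_1,\ldots,k_w)$-crossing problem into a much larger family for the ordinary $K$-crossing problem, where $K$ is a common multiple of $k_1,\ldots,k_w$ (for instance $K=k_1\cdots k_w$). Given an antichain $\cgA$ in $\ZZ^w$ with no two $(k_1,\ldots,k_w)$-crossing vectors, I would replace each $A\in\cgA$ by a whole bundle of vectors $A^{\vec t}\in\ZZ^w$, one for each offset tuple $\vec t=(t_2,\ldots,t_w)$ with $0\le t_j<K/k_j$, defined by $A^{\vec t}[j]=\tfrac{K}{k_j}A[j]+t_j$ for $2\le j\le w$ and $A^{\vec t}[1]=\tfrac{K}{k_1}A[1]-\sigma(\vec t)$, where $\sigma$ is a fixed strictly order-preserving map from the grid $\prod_{j=2}^w\{0,\ldots,K/k_j-1\}$ into an interval of integers; set $\cgA^\ast=\{A^{\vec t}:A\in\cgA,\ \vec t\}$. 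The key steps are then: (1) the map $(A,\vec t)\mapsto A^{\vec t}$ is injective — one recovers $A[j]$ and $t_j$ from coordinate $j$ by division with remainder, and then $A[1]$ from coordinate $1$ — so $|\cgA^\ast|=|\cgA|\cdot\prod_{j=2}^w(K/k_j)$; (2) $\cgA^\ast$ has no two $K$-crossing vectors, because a $K$-sized gap in a coordinate $j\ge 2$ forces (the offset $t_j-s_j$ being too small to matter) a $k_j$-sized gap in $\cgA$ in the same direction, and a $K$-sized gap in coordinate~$1$ forces a $k_1$-sized gap in $\cgA$ as long as the spread of $\sigma$ stays below $K/k_1$; (3) $\cgA^\ast$ is an antichain — within one bundle this is precisely strict order-preservation of $\sigma$, and between bundles of incomparable $A,B$ every witness of incomparability in $\cgA$ transfers, again because the spread of $\sigma$ is dominated by $K/k_1$. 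Granting (1)--(3), Conjecture~\ref{conj:main} yields $|\cgA^\ast|\le f(K,w)=K^{w-1}$, whence $|\cgA|\le K^{w-1}\big/\prod_{j=2}^w(K/k_j)=k_2\cdots k_w$, which is what we want.

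The delicate point — and the main obstacle — is making step~(3) go through for an arbitrary tuple, which requires producing a strictly order-preserving $\sigma$ whose spread is at most $K/k_1-1$, i.e.\ whose longest chain fits inside the room allotted to coordinate~$1$; equivalently $\sum_{j=2}^w(K/k_j-1)+1\le K/k_1$. When the thresholds are far from uniform this is not automatic, and the construction has to be refined: distributing the offsets over several coordinates rather than loading them all onto coordinate~$1$, replacing the plain scaling $v\mapsto\tfrac{K}{k_j}v$ on a coordinate whose threshold does not divide $K$ by a "sawtooth" reparametrization $v\mapsto g_j(v)$ given by the partial sums of a period-$k_j$ sequence of positive integers summing to $K$ (which turns threshold $k_j$ into threshold $K$ exactly), and choosing $K$ with sufficient divisibility. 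Carrying out this bookkeeping so that the offsets never overwhelm the signal coming from coordinate~$1$, and checking that the resulting family is still an antichain with no two $K$-crossing vectors, is where the work goes; no new conceptual ingredient beyond Conjecture~\ref{conj:main} and the scaling/offset idea is needed.
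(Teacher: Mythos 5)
Your easy direction and the overall logic of the reduction (blow up $\cgA$ to a family $\cgA^\ast$ for the uniform-$K$ problem, apply $f(K,w)=K^{w-1}$, divide by the bundle size) are fine, but the construction of the bundle has a gap that is not mere bookkeeping --- it is an actual obstruction, and the difficulty you flag at the end cannot be repaired within your framework. Strip away the specific parametrization: each $A\in\cgA$ is replaced by $\{g(A)+D:D\in\cgD\}$ for a fixed set $\cgD$ of offset vectors of size $\prod_{j=2}^w(K/k_j)$. Since two vectors in the same bundle differ only by offsets, $\cgD$ must itself be an antichain; and since $\cgA$ may contain a ``tight'' incomparable pair with $A[i]=B[i]+1$ (scaled gap exactly $K/k_i$), preserving incomparability between bundles forces the diameter of $\cgD$ in coordinate $i$ to be at most $K/k_i-1$. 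So you need an antichain of size $\prod_{j=2}^w(K/k_j)$ inside a box with side lengths $d_j=K/k_j$. The maximum antichain in such a box is the largest coefficient of $\prod_j(1+x+\cdots+x^{d_j-1})$ (products of chains have the Sperner property), which equals $\prod_{j\ge 2}d_j$ only when $d_1-1\ge\sum_{j\ge2}(d_j-1)$ --- your own condition --- and is \emph{strictly smaller} otherwise. Crucially, enlarging $K$ does not help: for $(k_1,k_2,k_3,k_4)=(2,3,3,3)$ and $K=6m$ the box is $3m\times 2m\times 2m\times 2m$, whose largest rank level has size about $24m^4/\sqrt{2\pi\cdot\tfrac{21}{12}m^2}\approx 7.24\,m^3$, below the required $8m^3$ for every $m$ (for $m=1$ it is $7<8$, for $m=2$ it is $56<64$). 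The sawtooth reparametrization only shrinks the minimal scaled gap in a coordinate, tightening the diameter constraint, and distributing offsets over several coordinates is exactly the general bundle $\cgD$ just ruled out. So the upward blow-up cannot produce a valid family for the uniform-$K$ problem for such tuples.

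The paper goes in the opposite direction: it reduces \emph{down} to the uniform-$k_1$ problem rather than up to a common multiple. For each choice of $k_1$-element subsets $I_j\subset\{0,\ldots,k_j-1\}$ it restricts $\cgA$ to the vectors whose $j$th residue mod $k_j$ lies in $I_j$, compresses those residues to $\{0,\ldots,k_1-1\}$ (which preserves the antichain property and turns non-$(k_1,\ldots,k_w)$-crossing into non-$k_1$-crossing), concludes that each such subfamily has size at most $k_1^{w-1}$, and then recovers $|\cgA|\le k_2\cdots k_w$ by averaging over all choices of the $I_j$, since each vector is counted $\binom{k_2-1}{k_1-1}\cdots\binom{k_w-1}{k_1-1}$ times. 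Selecting and compressing a substructure is always possible, which is precisely what sidesteps the counting obstruction your super-structure runs into. If you want to salvage your write-up, replacing the blow-up by this restrict-compress-average scheme is the way to go.
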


\begin{proof}
Clearly, Conjecture \ref{conj:general} yields Conjecture \ref{conj:main}.
To prove the converse implication, we assume $f(k_1,\ldots,k_1;w)=k_1^{w-1}$ and prove $f(k_1,\ldots,k_w;w)=k_2\cdots k_w$.
Let $\cgA$ be a $(k_1,\ldots,k_w)$-crossing-free antichain in $\ZZ^w$.
For any selection of $k_1$-element subsets $I_2\subset\{0,\ldots,k_2-1\}$, \ldots, $I_w\subset\{0,\ldots,k_w-1\}$, consider the family $\cgA(I_2,\ldots,I_w)=\{A\in\cgA\colon A[i]\bmod k_i\in I_i$ for $2\leq i\leq w\}$.
Now, modify each $A\in\cgA(I_2,\ldots,I_w)$ to get a vector $A'$ so that if $A[j]=a_jk_j+r_j$, where $0\leq r_j<k_j$, and $\ell_j$ is the position of $r_j$ in the natural ordering of $I_j$, then $A'[j]=a_jk_1+\ell_j$.
Clearly, the resulting family $\cgA'$ of all the vectors $A'$ is a $k_1$-crossing-free antichain.
Thus $|\cgA(I_2,\ldots,I_w)|=|\cgA'|\leq k_1^{w-1}$.
Summing up over all selections of subsets $I_2,\ldots,I_w$, we obtain
\[
\tbinom{k_2-1}{k_1-1}\cdots\tbinom{k_w-1}{k_1-1}{|\cgA|}\leq\tbinom{k_2}{k_1}\cdots\tbinom{k_w}{k_1}k_1^{w-1},
\]
which implies $|\cgA|\leq k_2\cdots k_w$.
\end{proof}

Proposition \ref{prop:equiv} tells us that in some sense the most difficult case is when all $k_i$ are equal.
Surprisingly, for some values of $k_i$, we know the exact answer.
For instance,
\[f(k,k,2k,\ldots,2^{w-1}k;w)=k\cdot 2k\cdots 2^{w-1}k.\]
Namely, we show that
\[f(k,k,2k,\ldots,2^{w-1}k;w)\leq kf(2k,2k,\ldots,2^{w-1}k;w-1),\]
which together with $f(k,k;2)=k$ and Proposition \ref{prop:genbounds} gives the previous equality.
We write $A<_1^2B$ if $A[1]<B[1]$ and $A[2]>B[2]$.
Every maximum chain in this order has size at most $k$, as otherwise it would yield a $(k,k,2k,\ldots,2^{w-1}k)$-crossing.
Let $\cgA'$ be a family of vectors of a fixed height in the order $<_1^2$.
Now let $\phi(A)=(A[1]+A[2],A[3],\ldots,A[w])$ for $A\in\cgA'$.
The mapping $\phi$ is an injection, and $\phi(\cgA')$ is a $(2k,2k,\ldots,2^{w-1}k)$-crossing-free antichain in $\ZZ^{w-1}$.
This gives the required inequality.

\section{Extremal examples}
\label{s:examples}

Some classical extremal problems have elegant solutions due to the fact that all maximal structures are also maximum.
For example, the maximum number of edges in a planar graph is $3n-6$ when $n\geq 3$, because if $G$ is any planar graph containing a face that is not a triangle, then an edge can be added to $G$ while preserving planarity.

Other extremal problems can have many different maximal structures but essentially only one which is maximum.
An example of this is Tur\'an's theorem, which asserts that the maximum number of edges in a graph on $n$ vertices which does not contain a complete subgraph on $k+1$ vertices is the number of edges in the complete $k$-partite graph on $n$ vertices, where the part sizes are as balanced as possible.
Another example is Sperner's theorem, which asserts that the only maximum antichains in the lattice of all subsets of $\{1,\ldots,n\}$ are the ranks at levels $\lfloor n/2\rfloor$ and $\lceil n/2\rceil$.

It is our feeling that the extremal problem discussed in this paper is challenging because there are many different examples that we suspect to be extremal.
We already presented one example at the beginning of Section \ref{s:bounds}, and in this section we develop some others.

\subsection{Inductive construction}

Suppose that we have constructed a $k$-crossing-free antichain $\cgA$ in $\ZZ^w$, and suppose it is contained in $[0,c)^w$.
We are going to construct an antichain $\cgA'$ of size $k{|\cgA|}$ on $w+1$ coordinates.
Put $k$ disjoint copies of $\cgA$ one above another on coordinates $1,\ldots,w$, that is, the $i$th copy inside $[(i-1)c,ic)^w$, and set the coordinate $w+1$ to be $-i$ for all vectors in the $i$th copy.
This way we obtain a $k$ times larger $k$-crossing-free antichain in $\ZZ^{w+1}$.
If $|\cgA|=k^{w-1}$, then $|\cgA'|=k^w$.

\subsection{Lexicographic construction}

When $A\in\ZZ^w$, the \textit{rank} of $A$ is the sum $A[1]+\cdots+A[w]$.
Let $k,w\geq 2$.
We construct an antichain $\cgA$ in $\ZZ^w$ as follows.
First, consider the family $\cgF$ of all vectors in $\ZZ^w$ with $0\leq A[i]\leq k-1$ for $1\leq i\leq w$ and $\sum_{i=1}^w A[i]\equiv w(k-1)\pmod{k}$.
Clearly, there are $k^{w-1}$ vectors in $\cgF$.
For each $A\in\cgF$, there is a unique non-negative integer $m(A)$ such that
\[m(A)\cdot k+A[1]+\cdots+A[w]=w(k-1).\]
Let $n$ be the maximum value of $m(A)$ taken over all vectors $A\in\cgF$.
Then, let $\tau=(i_1,\ldots,i_n)$ be any sequence of integers from $\{1,\ldots,w\}$.
We modify $\cgF$ into an antichain $\cgA$ by the following rule.
If $A\in\cgF$, then we modify $A$ by increasing coordinate $i$ by $pk$, where $p$ is the number of times $i$ occurs at the first $m(A)$ positions of $\tau$.
Clearly, these modifications result in a family $\cgA$ consisting of $k^{w-1}$ vectors.
Furthermore, since each vector $A\in\cgA$ has rank $w(k-1)$, we know that $\cgA$ is an antichain.
Also, $\cgA$ is $k$-crossing-free.

The example presented at the beginning of Section \ref{s:bounds} with the $w$th coordinate of all vectors shifted up by $w(k-1)$ is the special case of this construction where $\tau$ is the constant sequence $(w,\ldots,w)$.

\subsection{Cyclic construction}
\label{subsec:cyclic}

Here, we fix $w=3$ and consider coordinates $\{1,2,3\}$ in the cyclic order.
Thus if $i=3$ then $i+1=1$, and if $i=1$ then $i-1=3$.
Let $k\geq 2$.
Consider the infinite family
\[
\cgF=\{A\in\ZZ^3\colon A[i+1]\leq A[i]+k\text{ and }A[i-1]\leq A[i]+k-1\text{ for }i\in\{1,2,3\}\}.
\]
Clearly, it contains no two $k$-crossing vectors.
If $k\equiv 0$ or $k\equiv 2\pmod{3}$, then the vectors in $\cgF$ of rank $2k-1$ form an antichain of size $k^2$.
If $k\equiv 1\pmod{3}$, then the vectors in $\cgF$ of rank $2k-2$ form an antichain of size $k^2$.
In both cases, there is a cyclic symmetry between all three coordinates.

When $k\equiv 1\pmod{3}$, the vectors in $\cgF$ of rank $2k-1$ form an antichain of size $k^2-1$ only.
Still, we can add the vector $(\frac{k-1}{3}+k,\frac{k-1}{3},\frac{k-1}{3})$ to obtain a $k$-crossing-free antichain of size $k^2$ at the price of losing the cyclic symmetry.

\subsection{Remarks on rank}

All the examples we have constructed so far are \emph{ranked} antichains, that is, they consist of vectors in $\ZZ^w$ all of which have the same rank.
Based on this observation, it would be tempting to try to reduce the entire problem to ranked antichains.
Indeed, we have the following proposition.

\begin{proposition}
\label{prop:rank}
For all\/ $k,w\geq 1$, the maximum size of a ranked\/ $k$-crossing-free antichain in\/ $\ZZ^w$ is\/ $k^{w-1}$.
\end{proposition}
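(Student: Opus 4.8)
The lower bound is immediate: the family exhibited at the beginning of Section~\ref{s:bounds} consists of vectors all of rank $0$, so it is a ranked antichain of size $k^{w-1}$ with no $k$-crossing pair. The plan is therefore to prove the upper bound, namely that any ranked antichain $\cgA\subset\ZZ^w$ with no $k$-crossing pair satisfies $|\cgA|\le k^{w-1}$. The natural idea is to use the rank constraint to eliminate one coordinate, reducing a ranked antichain in $\ZZ^w$ to an antichain in $\ZZ^{w-1}$ of the same size, and then invoke Conjecture~\ref{conj:main}\,---\,but since the conjecture is open for $w\ge 4$, this approach cannot work directly. Instead I would mimic the modular injection argument used to prove $f(k,w)\le k^w$ (the $\sigma$ map), but exploit the common rank to cut the range down by a factor of $k$.

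Concretely: let $\cgA$ be a ranked antichain with common rank $\rho$ and no $k$-crossing pair. For each $A\in\cgA$ define $\sigma(A)\in\{0,\ldots,k-1\}^w$ by $\sigma(A)[i]\equiv A[i]\pmod k$. As in Section~\ref{s:bounds}, if $\sigma(A)=\sigma(B)$ for distinct $A,B\in\cgA$ then, choosing coordinates $i,j$ with $A[i]>B[i]$ and $B[j]>A[j]$ (which exist since $\cgA$ is an antichain), the differences $A[i]-B[i]$ and $B[j]-A[j]$ are both positive multiples of $k$, hence both $\ge k$, so $A$ and $B$ are $k$-crossing\,---\,a contradiction. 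Thus $\sigma$ is injective, so $|\cgA|$ is at most the number of attained values of $\sigma$. The key additional observation is that every $A\in\cgA$ satisfies $\sum_{i=1}^w\sigma(A)[i]\equiv\rho\pmod k$, so the image of $\sigma$ lies in the set
\[
S_\rho=\Bigl\{v\in\{0,\ldots,k-1\}^w:\textstyle\sum_{i=1}^w v[i]\equiv\rho\!\!\pmod k\Bigr\}.
\]
A standard counting argument shows $|S_\rho|=k^{w-1}$ for every residue $\rho$: the first $w-1$ coordinates may be chosen freely in $\{0,\ldots,k-1\}^{w-1}$, and then the last coordinate is uniquely determined in $\{0,\ldots,k-1\}$ by the congruence. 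Therefore $|\cgA|\le|S_\rho|=k^{w-1}$, which is the desired bound.

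I do not expect a serious obstacle here; the proposition is genuinely easier than the conjecture precisely because the rank constraint does the work that, in the general problem, one must fight for. The only point requiring a little care is the verification that $|S_\rho|=k^{w-1}$ independently of $\rho$, which follows from the free-choice-of-first-$(w-1)$-coordinates argument, and the observation that the modular argument uses only that $\cgA$ is an antichain with no $k$-crossing pair, not that it is ranked\,---\,the rank hypothesis enters solely to confine $\sigma(\cgA)$ to a single residue slice $S_\rho$. One could phrase the whole proof as: $f_{\mathrm{ranked}}(k,w)\le\min_\rho\max\{|T|:T\subseteq S_\rho\}$, but the direct bound $|\cgA|\le|S_\rho|$ already suffices.
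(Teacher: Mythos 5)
Your proof is correct and is essentially the paper's own argument: both exploit the modular map $\sigma$ together with the fact that the common rank forces the residues to lie in a set of size $k^{w-1}$. The paper packages this by dropping the $w$th coordinate (whose residue is determined by the other $w-1$ and the rank), while you keep all $w$ coordinates and count the slice $S_\rho$; these are trivially equivalent.
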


\begin{proof}
We only need to prove that if $\cgA$ is a ranked $k$-crossing-free antichain in $\ZZ^w$, then $|\cgA|\leq k^{w-1}$.
We can assume as before that $k,w\geq 2$.
For each vector $A$ in $\cgA$, let $\sigma(A)$ denote the vector in $\{0,\ldots,k-1\}^{w-1}$ such that $A[i]\equiv\sigma(A)[i]\pmod{k}$ for $1\leq i\leq k-1$.
Clearly, $\sigma$ is an injection and its range has at most $k^{w-1}$ elements.
\end{proof}

However, we know examples of $k$-crossing-free antichains in $\ZZ^w$ of the conjectured extremal size $k^{w-1}$ that are intrinsically non-ranked.
For example, for $k=2$ and $w=4$, the following eight vectors form a non-ranked $2$-crossing-free antichain in~$\ZZ^4$:
\[
\begin{array}{llll}
(0,2,1,1), &(2,1,0,1), &(1,0,2,1), &(1,1,1,1),\\
(1,3,2,0), &(3,2,1,0), &(2,1,3,0), &(2,2,2,0).
\end{array}
\]
The first four of the vectors above have rank $4$, while the last four have rank~$6$.
Moreover, this antichain is compressed on each of the four coordinates.
More generally, any family obtained by the cyclic construction (\ref{subsec:cyclic}) can be extended to $w=4$ in an analogous manner.

\subsection{Remarks on the size of the largest coordinate}

\begin{proposition}
\label{prop:rbigcoord}
Let\/ $k$ and\/ $w$ be positive integers.
Suppose that\/ $\cgA$ is a\/ $k$-crossing-free antichain in\/ $\ZZ^w$, and suppose that there is a coordinate\/ $j$ on which all vectors are different.
Then\/ $|\cgA|\leq k^{w-1}$.
\end{proposition}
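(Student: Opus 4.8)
The plan is to induct on $w$. The base case $w=1$ is immediate, since any antichain in $\ZZ^1$ has at most $k^0=1$ vector. For the inductive step, assume $w\ge 2$ and that the proposition holds in dimension $w-1$. After relabelling coordinates we may assume $j=w$, so the $w$-th coordinates of the vectors in $\cgA$ are pairwise distinct. I would then delete the first coordinate: let $\cgB$ be the image of $\cgA$ under the map $\phi\colon\ZZ^w\to\ZZ^{w-1}$ that forgets coordinate $1$ (keeping coordinates $2,\ldots,w$). Since coordinate $w$ already separates all vectors of $\cgA$, the map $\phi$ is injective on $\cgA$, so $|\cgB|=|\cgA|$; and $\cgB$ is still pairwise non-$k$-crossing and still has pairwise distinct $w$-th coordinates, because forgetting a coordinate can only remove $k$-crossings. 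Note that $\cgB$ need not be an antichain, but that is fine.

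The heart of the argument is the claim that $\cgB$, ordered by the product order on $\ZZ^{w-1}$, has height at most $k$; that is, it contains no chain of $k+1$ vectors. Suppose not, and take a chain $C_1<\cdots<C_{k+1}$ in $\cgB$ with preimages $\tilde C_1,\ldots,\tilde C_{k+1}\in\cgA$. Along the chain the coordinates $2,\ldots,w$ are coordinatewise nondecreasing, so in particular the $w$-th coordinates are strictly increasing (being distinct), which forces $\tilde C_{k+1}[w]-\tilde C_1[w]\ge k$. On the other hand, for $a<b$ we have $\tilde C_a\le\tilde C_b$ on coordinates $2,\ldots,w$, so since $\cgA$ is an antichain and $\tilde C_a\ne\tilde C_b$, incomparability forces $\tilde C_a[1]>\tilde C_b[1]$; hence the first coordinates are strictly decreasing along the chain and $\tilde C_1[1]-\tilde C_{k+1}[1]\ge k$. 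Then coordinate $1$ and coordinate $w$ together show that $\tilde C_1$ and $\tilde C_{k+1}$ are $k$-crossing, a contradiction.

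Once the height bound is in place, Mirsky's theorem partitions $\cgB$ into $k$ antichains $\cgB_1,\ldots,\cgB_k$ of $\ZZ^{w-1}$ (for instance, group the vectors of $\cgB$ by the length of the longest chain of $\cgB$ ending at them). Each $\cgB_t$ is a subfamily of $\cgB$, hence a pairwise non-$k$-crossing antichain in $\ZZ^{w-1}$ on which the $w$-th coordinate is still injective, so the inductive hypothesis gives $|\cgB_t|\le k^{w-2}$. Summing over $t$ yields $|\cgA|=|\cgB|\le k\cdot k^{w-2}=k^{w-1}$, as required. I expect the only real obstacle is isolating the right intermediate statement — the height-$\le k$ bound on $\cgB$ — and seeing that it is exactly here that the hypothesis "one coordinate is injective" does its work: it upgrades coordinate $w$ into a second "long" coordinate capable of producing a $k$-crossing once the first coordinate has been forced to decrease steadily along a long chain.
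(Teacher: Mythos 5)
Your proof is correct, but it takes a genuinely different route from the paper's. The paper's argument is a one-shot injection: assuming the distinguished coordinate is the first, it defines for each $i$ with $2\le i\le w$ the order $A<_iB$ iff $A[1]<B[1]$ and $A[i]>B[i]$, observes that every chain in $<_i$ has at most $k$ elements (the endpoints of a longer chain would be $k$-crossing via coordinates $1$ and $i$), and maps each vector to its $(w-1)$-tuple of heights in the orders $<_2,\ldots,<_w$; the antichain hypothesis makes this map into $\{1,\ldots,k\}^{w-1}$ injective, giving the bound directly with no induction. You instead delete one coordinate, prove that the projected family has height at most $k$ in the product order on the surviving coordinates, and recurse via Mirsky's theorem. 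Both arguments turn on the same mechanism --- the injective coordinate supplies one arm of a potential $k$-crossing, so any auxiliary order in which it strictly increases while another coordinate strictly decreases has height at most $k$ --- but your decomposition is nested (heights within heights, one coordinate at a time) where the paper's is simultaneous (a single tuple of heights in $w-1$ two-coordinate orders). The paper's version is shorter and avoids both induction and Mirsky; yours isolates a reusable intermediate statement (the height bound on a projection), which is the same kind of step the paper uses in the proof of Claim~\ref{cla:recursive}. All the steps you need do check out: the projection forgetting coordinate $1$ is injective because coordinate $w$ already separates $\cgA$; the antichain property forces coordinate $1$ to decrease strictly along any chain of the projection, yielding the $k$-crossing that bounds the height; and each Mirsky class inherits every hypothesis of the proposition in dimension $w-1$, so the induction closes.
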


\begin{proof}
Suppose all vectors differ on the first coordinate.
For $2\leq i\leq w$, we define an order $<_i$ on $\cgA$ as follows.
We put $A<_iB$ if $A[1]<B[1]$ and $A[i]>B[i]$.
The maximum size of a chain in this order is at most $k$, as otherwise $\cgA$ would have two $k$-crossing vectors.
Let $\phi(A)\in\{1,\ldots,k\}^{w-1}$ be the vector of heights of $A$ in orders ${<_2},\ldots,{<_w}$.
Clearly, if $A,B\in\cgA$ are such that $A[1]<B[1]$, then for some coordinate $i$ we have $A[i]>B[i]$, and thus the heights of $A$ and $B$ in $<_i$ are different.
This shows that the mapping $\phi\colon A\to\{1,\ldots,k\}^{w-1}$ is injective.
\end{proof}

It follows that in any $k$-crossing-free antichain in $\ZZ^w$ the number of different values attained on any coordinate is at most $k^{w-1}$.
Otherwise, a choice of representatives of the attained values would contradict Proposition~\ref{prop:rbigcoord}.

\subsection{Remarks on compression}

Careful analysis of the proof of the case $w=3$ shows that we do not really need a fully compressed coordinate.
We only use the following two properties:
\begin{enumerate}[label={($P_{\arabic*}$)}, noitemsep, topsep=2pt plus 2pt, leftmargin=*]
\item\label{item:prop1} For $1\leq j\leq r$, the set $\cgB_j=\{A\in\cgA\colon A[3]=s+(j-1)k\}$ is an antichain.
\item\label{item:prop2} For $1\leq j\leq r-1$, there is a short path from a vector $X_{j+1}$ in $\cgB_{j+1}$ to a vector $Y_j$ in $\cgB_j$.
\end{enumerate}

However, when $w=4$, this weaker notion of compression (with $A[3]$ replaced by $A[4]$) is not enough.
To see this, consider the union of the following families of vectors in $\ZZ^4$:
\begin{enumeratei}
\item The vectors for which $0\leq A[1],A[2]\leq k-1$, $A[3]\geq 2$, $A[1]+A[2]+A[3]=2k-2$, and $A[4]=k$.
\item All vectors of the form $(i,k-1-i,k+1,0)$ where $0\leq i\leq k-1$.
\item The vector $(k-1,k-1,k,0)$.
\item All vectors having rank $3k-2$ with $1\leq A[i]\leq k-1$ for $1\leq i\leq 4$.
\end{enumeratei}
It is easy to see that this family satisfies properties \ref{item:prop1} and \ref{item:prop2} but has more than $k^2$ vectors for which $A[4]\equiv 0\pmod{k}$.

\section*{Acknowledgments}

We are very grateful to Tomasz Krawczyk, who invented the problem, and Stefan Felsner for their significant contribution at the early stage of research.
We thank Dave Howard, Mitch Keller, Jakub Kozik, Ruidong Wang, Marcin Witkowski, and Stephen Young for their helpful comments and observations.
We also thank anonymous reviewers whose suggestions helped us to improve the quality of this paper.

\bibliographystyle{plain}
\bibliography{k-crossing}

\end{document}